\documentclass[11pt]{amsart}

\usepackage[margin=1in]{geometry}
\usepackage{amsmath,amssymb,amsthm,mathtools}
\usepackage{enumitem}
\usepackage{microtype}
\usepackage{booktabs}
\usepackage{tikz}
\usepackage{pgfplots}
\pgfplotsset{compat=1.18}
\usepgfplotslibrary{groupplots}
\usepackage[nocompress,space]{cite}
\usepackage{xcolor}
\usepackage[
  colorlinks=true,
  linkcolor=cyan,
  citecolor=red,
  urlcolor=blue
]{hyperref}
\usepackage{aliascnt}
\newtheorem{theorem}{Theorem}[section]
\newaliascnt{proposition}{theorem}
\newtheorem{proposition}[proposition]{Proposition}
\aliascntresetthe{proposition}
\newaliascnt{lemma}{theorem}
\newtheorem{lemma}[lemma]{Lemma}
\aliascntresetthe{lemma}
\newaliascnt{corollary}{theorem}
\newtheorem{corollary}[corollary]{Corollary}
\aliascntresetthe{corollary}
\theoremstyle{definition}
\newaliascnt{definition}{theorem}

\aliascntresetthe{definition}
\newaliascnt{example}{theorem}
\newtheorem{example}[example]{Example}
\aliascntresetthe{example}
\newaliascnt{assumption}{theorem}
\newtheorem{assumption}[assumption]{Assumption}
\aliascntresetthe{assumption}
\theoremstyle{remark}
\newaliascnt{remark}{theorem}
\newtheorem{remark}[remark]{Remark}
\aliascntresetthe{remark}
\usepackage[nameinlink,capitalize,noabbrev]{cleveref}
\crefname{theorem}{Theorem}{Theorems}
\Crefname{theorem}{Theorem}{Theorems}
\crefname{proposition}{Proposition}{Propositions}
\Crefname{proposition}{Proposition}{Propositions}
\crefname{lemma}{Lemma}{Lemmas}
\Crefname{lemma}{Lemma}{Lemmas}
\crefname{corollary}{Corollary}{Corollaries}
\Crefname{corollary}{Corollary}{Corollaries}
\crefname{definition}{Definition}{Definitions}
\Crefname{definition}{Definition}{Definitions}
\crefname{example}{Example}{Examples}
\Crefname{example}{Example}{Examples}
\crefname{assumption}{Assumption}{Assumptions}
\Crefname{assumption}{Assumption}{Assumptions}
\crefname{remark}{Remark}{Remarks}
\Crefname{remark}{Remark}{Remarks}

\newcommand{\R}{\mathbb{R}}
\newcommand{\cU}{\mathcal{U}}
\newcommand{\Qcone}{\mathcal{Q}}
\newcommand{\interior}{\operatorname{int}}
\newcommand{\val}{\operatorname{val}}
\newcommand{\barr}{\operatorname{bar}_{-}}
\newcommand{\epi}{\operatorname{epi}}
\newcommand{\dist}{\operatorname{dist}}
\newcommand{\dom}{\operatorname{dom}}
\newcommand{\ran}{\operatorname{ran}}
\newcommand{\ip}[2]{\left\langle #1,#2\right\rangle}
\newcommand{\norm}[1]{\left\lVert #1\right\rVert}
\newcommand{\Id}{\mathrm{I}}
\newcommand{\cond}{\operatorname{cond}_2}
\newcommand{\trans}{\mathsf{T}}
\newcommand{\eps}{\varepsilon}

\title[Attainment Boundaries and Escape Rates]
{Attainment Boundaries and Escape Rates in Asymptotically Conic Optimization}

\author{Vinh Nguyen}
\address{Department of Mathematics, University of California, Berkeley, CA 94720, USA}
\email{vnguyen26@berkeley.edu}

\subjclass[2020]{90C25, 90C22, 90C46, 49J53, 52A41}
\keywords{attainment, conic optimization, second-order cone programming, recession-cone, robust optimization}
\date{}
\begin{document}
\begin{abstract}
We study attainment boundaries for linear optimization over unbounded convex sets.  For epigraphs of finite convex functions, convex conjugacy separates recession-cone copositivity, boundedness below, and attainment through three nested subsets of conjugate space.  At a finite but unattained boundary value, we establish a facewise asymptotic selection theorem for inward perturbations of the objective.  The theorem identifies the escaping directions of the perturbed minimizers, their precise blow-up scale, and the leading asymptotics of the optimal value; the critical set may be multidimensional, and no radial symmetry is assumed.  For radial asymptotically conic epigraphs, we obtain a complete boundary trichotomy and universal scaling laws for objective tilting, hard truncation, and power regularization.  For shifted ellipsoidal second-order cone programs, we derive explicit primal--dual formulas together with sharp escape, conditioning, and regularization rates.  These models also admit an exact robust-optimization representation.  At the attainment boundary, strict primal feasibility, zero duality gap, and dual attainment can coexist with failure of primal attainment.
\end{abstract}

\maketitle
\section{Introduction}\label{sec:introduction}

A linear functional that is bounded below on a nonempty polyhedron attains its minimum.  This elementary-looking principle is one of the structural reasons why linear programming has such a stable solvability theory; classical formulations can be found in \cite{R1970,BS2000}.  In recession-cone form, if $P$ is a nonempty polyhedron and $c$ is nonnegative on $P^\infty$, then the linear program $\inf_{x\in P}\ip{c}{x}$ has a finite value and a solution.  The statement is usually treated as a single theorem, but it combines three different properties: nonnegativity on recession directions, boundedness of the objective, and attainment of the infimum.  Outside polyhedral optimization these properties separate, as already illustrated by early work on convex programs with unattained infima \cite{A1975}.

The separation matters in modern conic models.  Second-order and semidefinite cones generate tractable formulations for robust optimization, control, statistics, and machine learning \cite{AG2003}, but their affine slices and linear images need not inherit the closedness properties of polyhedra.  Closed linear images are central in conic duality \cite{P2007}.  Recent work develops solution-existence criteria under weak regularity, constraint-qualification-free optimality conditions, and sharp error bounds for nonpolyhedral cones \cite{AHMRS2023,KT2024,LY2024,LLP2025,LLLP2025}.  For nonsmooth problems with unbounded feasible regions, Hung, Chuong, and Anh \cite{HCA2025} characterize finiteness, existence, and compactness of global solution sets through coercivity, sublevel compactness, and asymptotic geometry.  Facial reduction is a standard geometric tool for exposing the minimal face responsible for degeneracy \cite{BW1981}; recent extensions carry this idea from cones to general convex sets and convex functions \cite{LLL2025}.  Primal--dual level-set geometry also relates the size of primal level sets to dual interiority \cite{F2003}.

The problem considered here draws on three classical themes.  Convex conjugacy and recession analysis describe boundedness and asymptotic slopes \cite{R1970,RW1998}; conic duality and facial reduction explain how degeneracy affects dual representations and constraint qualifications \cite{BW1981,P2007}; and condition theory quantifies the deterioration of optimization problems as the data approach ill-posedness \cite{R1995,PR2020}.  Here the feasible set remains fixed while the objective approaches a slope for which the optimal value stays finite but primal attainment disappears.  Nearly optimal primal points then escape to infinity even though feasibility and duality may remain regular.  We identify this boundary in conjugate space and quantify the escape directions, blow-up rates, value asymptotics, and loss of conditioning.

Recent work on conic optimization treats several nearby issues.  Luan and Yen \cite{LY2024} obtain strong duality and primal--dual solution existence under weak separation and generalized Slater-type hypotheses, while Lin et al. \cite{LLLP2025} derive tight error bounds for the log-determinant cone without imposing constraint qualifications.  Lin, Liu, and Louren\c{c}o \cite{LLL2025} further extend facial-reduction ideas from cones to general convex sets and convex functions.  These works concern feasibility, constraint qualification, duality, or distance to a feasible set.  In the setting studied here, the feasible epigraph is fixed and may satisfy Slater's condition; only the objective slope moves toward a boundary point where the infimum is finite but unattained.  Since no finite optimizer exists at the limit, the asymptotic object is a face of directions at infinity.  In the ellipsoidal SOCP, strict primal feasibility and dual attainment persist at the nonattainment boundary.

We study attainment boundaries and escape rates for unbounded convex sets that approach cones at infinity.  We begin with the epigraph $E_f:=\{(t,z)\in\R\times X:t\geq f(z)\}$ of a finite convex function on a finite-dimensional space.  For a linear objective $\ell_{a,b}(t,z)=at+b(z)$ with $a>0$, set $p:=-b/a$.  The three solvability properties are encoded by the nested sets
\[
  \ran\partial f\subseteq\dom f^*\subseteq\overline{\dom f^*}.
\]
The objective is copositive on $E_f^\infty$ exactly when $p\in\overline{\dom f^*}$, it is bounded below exactly when $p\in\dom f^*$, and it attains its minimum exactly when $p\in\ran\partial f$.  Thus the set $\overline{\dom f^*}\setminus\dom f^*$ describes objectives that are copositive but unbounded below, whereas $\dom f^*\setminus\ran\partial f$ describes objectives with finite unattained infima.  This classification, established in \Cref{thm:conjugate-classification}, provides a conjugate-geometric refinement of recession-cone analysis.  It also recovers the polyhedral theorem because a finite polyhedral convex function satisfies $\ran\partial f=\dom f^*=\overline{\dom f^*}$.

The same classification also controls minimizing sequences.  When $p\in\dom f^*\setminus\ran\partial f$, every minimizing sequence escapes to infinity.  Every cluster point of its normalized directions belongs to the normal cone of $\overline{\dom f^*}$ at $p$, or equivalently generates a zero-cost recession direction.  Hence finite nonattainment can occur only along recession directions normal to the boundary slope in conjugate space.

The main quantitative result describes directional selection and escape rates near a finite nonattainment boundary.  At a nonattained boundary slope $p_0$, write $g(x)=f(x)-p_0(x)+f^*(p_0)$ and suppose that, uniformly in direction, $g(ru)$ admits a first recession term plus a lower-order positive defect of order $r^{-\nu}$.  For the inward perturbation $p_\mu=p_0-\mu h$, \Cref{thm:facewise-selection} shows that every optimizer escapes at the scale $\mu^{-1/(\nu+1)}$, while its normalized direction is selected from the zero-recession face by the directional defect coefficient and the inward tilt.  The critical face may be multidimensional, and radial symmetry is not assumed.  \Cref{ex:nonradial-multidimensional-face} gives an explicit nonradial convex function whose critical set is a two-dimensional spherical patch and for which an inward tilt selects a unique escaping direction.

The selection theorem differs from the optimal-path analysis of Auslender, Cominetti, and Haddou \cite{ACH1997}.  Their framework studies primal and dual trajectories generated by broad classes of penalty and barrier functions and establishes convergence of those trajectories to primal and dual optimal sets; in linear programming the limiting points are singletons.  Here the unperturbed problem has a finite infimum but no primal optimizer, so the perturbed minimizers cannot converge to a finite primal optimal set.  The perturbation here is an inward change of the objective slope, and the conclusion describes escape to infinity: the selected directions, the blow-up scale, and the leading value law, including the case of a multidimensional critical face.  The penalty/barrier framework of \cite{ACH1997}, in turn, covers algorithmically generated optimal paths under assumptions that are not tied to the facewise recession expansion used here.

The radial case is described by the epigraphs
\[
  C_{\phi,\rho}:=
  \{(t,z)\in\R\times X:t\geq\phi(\rho(z))\},
\]
where $\rho$ is a norm and $\phi$ is finite, convex, nondecreasing, and asymptotically linear with slope $L>0$.  The recession cone is
\[
  K_{L,\rho}:=\{(s,w):s\geq L\rho(w)\}.
\]
The conjugate formula reduces to one scalar variable, and the distinction among the three objective regions is determined by the asymptotic defect
\[
  \psi(r):=\phi(r)-Lr.
\]
Convexity forces $\psi$ to be nonincreasing.  At a nonzero boundary objective, the limit of $\psi$ produces an exact trichotomy.  If $\psi(r)\to-\infty$, the objective is copositive on the recession cone but unbounded below.  If $\psi$ reaches a finite limiting value, then $\phi$ is eventually affine and the minimum is attained.  If $\psi$ approaches a finite limit without reaching it, then the infimum is finite and unattained.  These alternatives have the same recession cone, so they isolate precisely the information that recession geometry omits.

Assume in addition that the boundary defect satisfies
\[
  \psi(r)-\gamma\sim\kappa r^{-\nu},
  \qquad \kappa>0,\quad \nu>0.
\]
A convex regular-variation lemma gives the corresponding asymptotics for the one-sided derivatives.  The same exponent $\nu$ then governs three standard approximation mechanisms.  An objective tilt by a copositivity margin $\mu$ selects decisions of size $\mu^{-1/(\nu+1)}$ and produces an optimal-value excess of order $\mu^{\nu/(\nu+1)}$.  A hard decision bound that allows error $\eta$ must grow like $\eta^{-1/\nu}$.  A power regularizer with coefficient $\lambda$ and exponent $q>1$ selects decisions of size $\lambda^{-1/(\nu+q)}$ and residual of order $\lambda^{\nu/(\nu+q)}$.  The constants are explicit; see \Cref{sec:universal-rates}.  These rates are related to the condition-number principle that deterioration is governed by proximity to a boundary of well-posed instances \cite{R1995,PR2020}, and to recent facial-residual analyses of non-Lipschitz error scales for nonpolyhedral and perspective cones \cite{LLP2025,LLLP2025,WLP2025}.  For example, \cite{WLP2025} studies the frequent one-half H\"older exponent in conic error bounds through facial reduction.  The quantities studied here are instead the size and local conditioning of primal optimizers as the objective approaches the attainment boundary.

A central example is the shifted ellipsoidal epigraph
\[
  C_{\delta,H}:=
  \left\{(t,z)\in\R\times\R^n:
  t\geq\sqrt{\delta^2+z^{\trans}Hz}\right\},
  \qquad H\succ0,
\]
which is second-order-cone representable.  Let $\beta_H(b)=\sqrt{b^{\trans}H^{-1}b}$.  Minimizing $at+b^{\trans}z$ over $C_{\delta,H}$ gives value $-\infty$ when $a<\beta_H(b)$, a unique minimizer and value $\delta\sqrt{a^2-\beta_H(b)^2}$ when $a>\beta_H(b)$, and finite value zero without a minimizer when $a=\beta_H(b)>0$.  At the boundary, the primal problem is strictly feasible, its dual optimum is attained, and the duality gap is zero.  Only primal attainment fails.  Exact formulas show that interior minimizers diverge at the inverse square-root rate, the reduced Hessian condition number diverges at the inverse-margin rate, and boundary $\eta$-optimal solutions require norm of order $\eta^{-1}$.  Quadratic Tikhonov regularization with coefficient $\eps$ yields the distinct scale $\eps^{-1/3}$, in agreement with the general power-law theorem for $\nu=1$ and $q=2$.

The same ellipsoidal set is the exact robust counterpart of a family of affine inequalities with joint ellipsoidal uncertainty in the intercept and slope coefficients.  Robust optimization with ellipsoidal uncertainty is a standard source of second-order cone models \cite{BN1998,BEN2009}.  Recent work analyzes robust feasibility radii, optimality and duality, and stability for uncertain conic systems \cite{GJLV2022}.  Related papers by Chuong and collaborators study robust optimality and duality, adjustable Farkas-type alternatives, stable conic relaxations, and conic reformulations for uncertain convex and multiobjective models \cite{C2020,CJ2020,CV2023,HCA2024,CYELC2025}.  Those papers focus on certification, duality, and reformulation under uncertainty.  In the model studied here, feasibility and boundedness persist, and the conic reformulation satisfies Slater's condition with zero duality gap, yet the infimum may fail to be attained.  This occurs on a codimension-one cost boundary, with explicit blow-up of nearby optimizers.

The paper is organized as follows.  \Cref{sec:preliminaries} collects the convex-analytic facts used throughout.  \Cref{sec:conjugate} proves the general epigraph classification and the directional theorem for minimizing sequences.  \Cref{sec:facewise} establishes the nonradial facewise selection theorem and the associated escape-rate law for inward objective perturbations.  \Cref{sec:radial} gives the radial boundary trichotomy, and \Cref{sec:universal-rates} establishes the regular-variation scaling laws for tilting, truncation, and regularization.  \Cref{sec:socp-models} develops the exact ellipsoidal SOCP phase diagram and its primal--dual formulas.  \Cref{sec:quantitative} derives sharp sensitivity, conditioning, and Tikhonov estimates.  Finally, \Cref{sec:robust-extensions} gives the robust-optimization interpretation.

\section{Convex-analytic preliminaries}\label{sec:preliminaries}

Throughout, all vector spaces are finite-dimensional and real.  For a set $A$, we denote its closure by $\bar A$.  Let $X$ be such a space, let $X^*$ be its algebraic dual, and let $\rho$ be a norm on $X$.  The zero-dimensional case is immediate, so we assume that $X\neq\{0\}$.  The dual norm is
\[
  \rho_*(b):=\sup\{b(z):\rho(z)\leq1\},
  \qquad b\in X^*.
\]
Since the unit ball is compact and symmetric, this agrees with the usual dual norm and the supremum is attained.  For every $b\in X^*$, define the set of norming directions
\[
  \mathcal D_\rho(b):=
  \{u\in X:\rho(u)=1,\ b(u)=\rho_*(b)\}.
\]
It is nonempty and compact; when $b=0$, it is the entire unit sphere.  Symmetry of the norm gives
\[
  \min_{\rho(z)=r}b(z)=-r\rho_*(b),
  \qquad r\geq0,
\]
with minimizers $z=-ru$, $u\in\mathcal D_\rho(b)$ when $r>0$.

For a nonempty closed convex set $C\subset Y$, its recession cone is
\[
  C^\infty:=\{d\in Y:x+\lambda d\in C
  \text{ for every }x\in C\text{ and every }\lambda\geq0\}.
\]
A linear functional $c\in Y^*$ is \emph{copositive} on $C^\infty$ if $c(d)\geq0$ for all $d\in C^\infty$, and \emph{strictly copositive} if $c(d)>0$ for every nonzero $d\in C^\infty$.

We use the lower barrier cone
\[
  \barr(C):=\left\{c\in Y^*:
  \inf_{x\in C}c(x)>-\infty\right\}.
\]
Standard convex analysis gives
\begin{equation}\label{eq:barrier-closure}
  \overline{\barr(C)}=(C^\infty)^*,
\end{equation}
where the dual cone is taken with the nonnegative pairing convention; see \cite[Sections~8 and 14]{R1970}, and \cite{RW1998}.  The possible failure of $\barr(C)$ to be closed is one reason why copositivity on $C^\infty$ need not imply boundedness below.

Strict copositivity also yields level boundedness, a fact used repeatedly below.
\begin{proposition}\label{prop:strict-copos}
Let $C\subset Y$ be nonempty, closed, and convex, and let $c\in Y^*$.  If $c$ is strictly copositive on $C^\infty$, then every sublevel set $\{x\in C:c(x)\leq\alpha\}$ is bounded.  Consequently, $c$ has a finite minimum on $C$.
\end{proposition}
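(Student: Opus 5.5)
We argue by contradiction through a normalization argument. Since $Y$ is finite-dimensional, fix any norm $\|\cdot\|$ on $Y$. Suppose that for some $\alpha\in\R$ the sublevel set $S_\alpha:=\{x\in C:c(x)\leq\alpha\}$ is unbounded. Then there are points $x_k\in S_\alpha$ with $\|x_k\|\to\infty$. Fix a base point $x_0\in C$ and set $d_k:=(x_k-x_0)/\|x_k-x_0\|$, which is well defined for large $k$. By compactness of the unit sphere we pass to a subsequence with $d_k\to d$ and $\|d\|=1$.

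The first key step is to show that $d\in C^\infty$. Fix $\lambda\geq0$. For $k$ large we have $t_k:=\lambda/\|x_k-x_0\|\in[0,1]$, so convexity gives
\[
  x_0+\lambda d_k=(1-t_k)x_0+t_kx_k\in C .
\]
Letting $k\to\infty$ and using closedness of $C$ yields $x_0+\lambda d\in C$. For a nonempty closed convex set, the condition ``$x_0+\lambda d\in C$ for all $\lambda\geq0$ at a single point $x_0$'' already implies $d\in C^\infty$ in the sense of the definition above, i.e.\ at every point of $C$. This is standard \cite[Theorem~8.3]{R1970}, and we will quote it rather than reprove it.

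The second step evaluates $c$ along this direction. By linearity,
\[
  c(d_k)=\frac{c(x_k)-c(x_0)}{\|x_k-x_0\|}\leq\frac{\alpha-c(x_0)}{\|x_k-x_0\|}\to0 .
\]
Since $c$ is continuous, it follows that $c(d)\leq0$. Together with $d\neq0$ and $d\in C^\infty$, this contradicts strict copositivity. Hence every sublevel set $S_\alpha$ is bounded.

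For the consequence, take $\alpha:=c(x_0)$. Then $S_\alpha$ is nonempty, closed (as the intersection of $C$ with a closed half-space), and bounded, hence compact. The continuous function $c$ therefore attains its minimum on $S_\alpha$, and this minimum is the minimum over all of $C$, because points of $C$ outside $S_\alpha$ have larger value. The minimum is finite. The only genuinely nontrivial ingredient is the independence of the recession cone from the base point, which we expect to be the main (though classical) obstacle. If a self-contained argument is preferred, the same limiting construction applied with an arbitrary base point $y\in C$ in place of $x_0$, using the directions $(x_k-y)/\|x_k-y\|$, has the same limit $d$ because $\|x_k\|\to\infty$, and it gives $y+\lambda d\in C$ directly.
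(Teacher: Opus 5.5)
Your proof is correct and follows essentially the same route as the paper: you normalize an unbounded sequence in the sublevel set relative to a base point in $C$, extract a unit limit direction $d\in C^\infty$ with $c(d)\leq 0$, and contradict strict copositivity. The only differences are that you spell out the recession-direction step the paper calls ``standard'' (convex combinations plus closedness, with the base-point-independence argument, which is valid), and you obtain the minimum directly from compactness of the sublevel set at level $c(x_0)$.
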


\begin{proof}
Fix any norm on $Y$.  Assume that a sublevel set is unbounded.  Choose $x_k\in C$ and $\bar x\in C$ such that
\[
  c(x_k)\leq\alpha,
  \qquad r_k:=\norm{x_k-\bar x}\to\infty.
\]
After passing to a subsequence, $(x_k-\bar x)/r_k\to d$ with $\norm d=1$.  The standard recession-direction argument for closed convex sets gives $d\in C^\infty$, while
\[
  c(d)=\lim_{k\to\infty}\frac{c(x_k)-c(\bar x)}{r_k}\leq0,
\]
contradicting strict copositivity.  Thus every sublevel set is bounded.  The infimum cannot be $-\infty$, since then a sequence with $c(x_k)\to-\infty$ would eventually lie in a fixed bounded sublevel set.  Hence a minimizing sequence lies in a compact sublevel set, and continuity of $c$ yields a minimizer.
\end{proof}

\begin{remark}\label{rem:strict-boundary}
Objectives in the interior of $(C^\infty)^*$ therefore have bounded sublevel sets and attain their minima.  On the boundary, the objective vanishes on at least one nonzero recession direction, and attainment may fail.
\end{remark}

\section{Epigraph solvability through convex conjugacy}\label{sec:conjugate}

For finite convex epigraphs, the recession formula, the support-function representation of $f^\infty$, Fenchel duality, and subgradient inversion are classical; see, for example, \cite{R1970,RW1998}.  \Cref{thm:conjugate-classification} combines these facts into a three-level classification in slope space and identifies the two gaps between the resulting regions.  Part~\textup{(v)} determines whether the endpoint of the objective image is attained.

Let $f:X\to\R$ be finite and convex.  It is therefore continuous.  Its Fenchel conjugate is
\[
  f^*(p):=\sup_{z\in X}\{p(z)-f(z)\},
  \qquad p\in X^*,
\]
and its recession function is
\begin{equation}\label{eq:recession-function}
  f^\infty(w):=
  \lim_{\lambda\to\infty}
  \frac{f(z+\lambda w)-f(z)}{\lambda}.
\end{equation}
The limit exists in $\R\cup\{+\infty\}$ and is independent of $z$.  Standard conjugacy gives
\begin{equation}\label{eq:recession-support}
  f^\infty=\sigma_{\dom f^*}
  =\sigma_{\overline{\dom f^*}},
\end{equation}
where $\sigma_D(w)=\sup_{p\in D}p(w)$ is the support function; see \cite[Theorems~8.5 and 13.3]{R1970}.

For the epigraph $E_f:=\epi f=\{(t,z):t\geq f(z)\}$, consider the objective $\ell_{a,b}(t,z):=at+b(z)$.  Theorem~\ref{thm:conjugate-classification} identifies the boundary between copositivity, boundedness below, and solvability.

\begin{theorem}\label{thm:conjugate-classification}
Let $f:X\to\R$ be finite and convex.  If $a<0$, then $\ell_{a,b}$ is unbounded below on $E_f$; the same holds when $a=0$ and $b\neq0$, while the zero objective is attained everywhere.  Assume henceforth that $a>0$ and set $p:=-b/a$.  Then:

\begin{enumerate}[label=\textup{(\roman*)}]
\item The recession cone is
\begin{equation}\label{eq:epi-recession}
  E_f^\infty=\epi f^\infty.
\end{equation}
\item The objective is copositive on $E_f^\infty$ if and only if $p\in\overline{\dom f^*}$.
\item The objective is bounded below on $E_f$ if and only if $p\in\dom f^*$; in that case, 
\[
\inf_{E_f}\ell_{a,b}=-a f^*(p).
\]
\item The infimum is attained if and only if $p\in\ran\partial f=\dom\partial f^*$, and then the minimizers are exactly 
\[
\big\{(f(z),z):p\in\partial f(z)\big\}.
\]
\item If $p\in\dom f^*\setminus\ran\partial f$, then $\ell_{a,b}(E_f)=(-a f^*(p),\infty)$; if $p\in\ran\partial f$, the left endpoint is included.
\end{enumerate}
\end{theorem}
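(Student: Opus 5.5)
The plan is to reduce everything to a one-variable computation on the graph of $f$. For $(t,z)\in E_f$ and $a>0$ we have $\ell_{a,b}(t,z)\geq af(z)+b(z)=-a\bigl(p(z)-f(z)\bigr)$, with equality exactly when $t=f(z)$. Consequently
\[
  \inf_{E_f}\ell_{a,b}=\inf_{z}\{af(z)+b(z)\}=-a\sup_z\{p(z)-f(z)\}=-af^*(p),
\]
which is $-\infty$ precisely when $p\notin\dom f^*$. This settles (iii).

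\textbf{Degenerate cases.} If $a<0$, moving along $(t,0)$ with $t\to\infty$ stays in $E_f$ and sends the objective to $-\infty$. If $a=0$ and $b\neq0$, pick $z$ with $b(z)<0$; the points $(f(\lambda z),\lambda z)$ lie in $E_f$ and have value $\lambda b(z)\to-\infty$. The zero objective is trivially attained everywhere.

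\textbf{Part (i).} The inclusion $\epi f^\infty\subseteq E_f^\infty$ follows from the standard inequality $f(z+\lambda w)\leq f(z)+\lambda f^\infty(w)$, which is the convexity of the difference quotient in \eqref{eq:recession-function}. For the converse, take $(s,w)\in E_f^\infty$. Then $(f(0)+\lambda s,\lambda w)\in E_f$ for all $\lambda\geq0$, so $\bigl(f(\lambda w)-f(0)\bigr)/\lambda\leq s$, and letting $\lambda\to\infty$ gives $f^\infty(w)\leq s$.

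\textbf{Part (ii).} This is the step I expect to be the only one requiring care, namely matching copositivity with membership in $\overline{\dom f^*}$ via \eqref{eq:recession-support}. By (i) and $a>0$, copositivity means $as+b(w)\geq0$ whenever $s\geq f^\infty(w)$. This is equivalent to $af^\infty(w)+b(w)\geq0$ for all $w$ (points with $f^\infty(w)=+\infty$ impose nothing), that is, $p(w)\leq\sigma_{\overline{\dom f^*}}(w)$ for all $w$. Since $\overline{\dom f^*}$ is closed, convex and nonempty (nonempty because $f$ is finite convex, hence has an affine minorant), the support-function characterization of closed convex sets shows that this holds if and only if $p\in\overline{\dom f^*}$. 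Concretely, if $p\notin\overline{\dom f^*}$, strict separation produces a $w$ with $p(w)>\sigma(w)$.

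\textbf{Parts (iv) and (v).} By the reduction above, a point $(t,z)$ is a minimizer iff $t=f(z)$ and $z$ maximizes $p(\cdot)-f(\cdot)$. By Fermat's rule for the convex function $f-p$, the latter holds iff $0\in\partial f(z)-p$, i.e. $p\in\partial f(z)$. Hence attainment holds iff $p\in\ran\partial f$, and the minimizer set is as stated. The identity $\ran\partial f=\dom\partial f^*$ is subgradient inversion, valid since $f$ is closed proper convex. For (v), the image $\ell_{a,b}(E_f)$ is convex because $E_f$ is convex, and it is unbounded above along $t\to\infty$. Its infimum is $-af^*(p)$, so the image is an interval with that left endpoint, included exactly when attained. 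That gives the two cases according to (iv).
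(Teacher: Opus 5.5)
Your proposal is correct and follows essentially the same route as the paper: you eliminate $t$ to obtain the value $-af^*(p)$, combine $f^\infty=\sigma_{\overline{\dom f^*}}$ with separation for copositivity, characterize minimizers by $p\in\partial f(z)$, and read off the image as a half-line. The small differences are cosmetic: you use Fermat's rule where the paper uses equality in Fenchel's inequality (the same fact), and your direct verification of (i) and the remark that $\dom f^*\neq\emptyset$ fill in details the paper cites as standard.
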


\begin{proof}
The cases $a\leq0$ follow as in linear programming: when $a<0$, increase the epigraph variable $t$; when $a=0$ and $b\neq0$, move $z$ along a direction on which $b$ is negative.

Formula \eqref{eq:epi-recession} is the standard epigraph identity for recession functions.  It can also be verified directly from \eqref{eq:recession-function}.  For $a>0$, copositivity on $\epi f^\infty$ is equivalent to
\[
  a f^\infty(w)+b(w)\geq0
  \qquad(w\in X), \quad \text{ or}
\]
\[
  p(w)\leq f^\infty(w)
  =\sigma_{\overline{\dom f^*}}(w)
  \qquad(w\in X).
\]
By the separation theorem, the latter condition is equivalent to $p\in\overline{\dom f^*}$.

Eliminating $t$ gives
\begin{align*}
  \inf_{(t,z)\in E_f}\ell_{a,b}(t,z)
  &=a\inf_{z\in X}\{f(z)-p(z)\}
  =-a f^*(p).
\end{align*}
This value is finite exactly on $\dom f^*$.  Equality in Fenchel's inequality
\[
  f(z)+f^*(p)\geq p(z)
\]
holds exactly when $p\in\partial f(z)$, equivalently $z\in\partial f^*(p)$.  This proves the attainment criterion and the minimizer formula.

Finally, let $m$ denote the infimum.  For every $v>m$, the definition of the infimum provides a feasible point whose objective value is strictly below $v$; increasing its $t$-coordinate then realizes the value $v$.  Hence the image is the upper half-line based at $m$, with the endpoint included exactly when the infimum is attained.
\end{proof}

The theorem produces three intrinsic regions in the normalized slope space $X^*$: $\mathcal S_f:=\ran\partial f$, $\mathcal B_f:=\dom f^*$, and $\mathcal C_f:=\overline{\dom f^*}$.  For an objective with $a>0$, membership of the normalized slope $p=-b/a$ in these sets means, respectively, solvability, boundedness below, and recession-cone copositivity.

\begin{corollary}\label{cor:three-regions}
For every finite convex $f$,
\[
  \mathcal S_f\subseteq\mathcal B_f\subseteq\mathcal C_f,
  \qquad
  \overline{\mathcal B_f}=\mathcal C_f.
\]
The set $\mathcal C_f\setminus\mathcal B_f$ consists exactly of normalized slopes of positive-$t$ objectives that are copositive but unbounded below.  The set $\mathcal B_f\setminus\mathcal S_f$ consists exactly of normalized slopes of objectives with finite unattained infima.
\end{corollary}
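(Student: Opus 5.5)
The corollary is essentially a repackaging of \Cref{thm:conjugate-classification}, so I will derive each claim from the corresponding part of that theorem together with two standard facts about $f^*$.

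\emph{The chain of inclusions.} For $\mathcal S_f\subseteq\mathcal B_f$, take $p\in\ran\partial f$, say $p\in\partial f(z)$. Fenchel's equality gives $f^*(p)=p(z)-f(z)<\infty$, so $p\in\dom f^*$. The inclusion $\mathcal B_f\subseteq\mathcal C_f$ holds trivially, since $\mathcal C_f$ is the closure of $\mathcal B_f$. The same definition gives $\overline{\mathcal B_f}=\mathcal C_f$. I will still record why these sets are meaningful. Since $f$ is finite, $f^*$ is proper: $f^*(p)\geq p(0)-f(0)>-\infty$ for all $p$, and $\partial f(0)\neq\emptyset$ gives $\dom f^*\neq\emptyset$.

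\emph{The two difference sets.} Fix an objective $\ell_{a,b}$ with $a>0$ and let $p=-b/a$. Parts (ii) and (iii) of \Cref{thm:conjugate-classification} say that copositivity on $E_f^\infty$ is equivalent to $p\in\mathcal C_f$, and boundedness below is equivalent to $p\in\mathcal B_f$. Hence the objective is copositive but unbounded below exactly when $p\in\mathcal C_f\setminus\mathcal B_f$. Likewise, part (iii) says the infimum is finite exactly when $p\in\mathcal B_f$, and part (iv) says it is attained exactly when $p\in\mathcal S_f$. So the infimum is finite and unattained exactly when $p\in\mathcal B_f\setminus\mathcal S_f$.

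\emph{Surjectivity onto slopes.} "Consists exactly of normalized slopes" also needs the converse: every $p$ in a difference set is the normalized slope of some objective. This follows by taking $a=1$ and $b=-p$.

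There is no real obstacle here. The only point needing care is whether $\mathcal C_f$ is defined as $\overline{\dom f^*}$ or through copositivity. If it is the latter, $\overline{\mathcal B_f}=\mathcal C_f$ becomes the content of part (ii), which rests on the support-function identity \eqref{eq:recession-support} and separation. Either way it is already established.
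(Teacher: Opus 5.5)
Your proposal is correct and follows the same route as the paper: the paper also reads the inclusions and both difference sets directly off \Cref{thm:conjugate-classification} and notes that $\overline{\mathcal B_f}=\mathcal C_f$ holds by definition. Your extra details (Fenchel equality giving $\mathcal S_f\subseteq\mathcal B_f$, and realizing each $p$ as the normalized slope of $\ell_{1,-p}$) only make explicit what the paper's one-line proof leaves implicit.
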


\begin{proof}
The inclusions and interpretations follow from \Cref{thm:conjugate-classification}.  The closure identity is the definition of $\mathcal C_f$.
\end{proof}

For a finite polyhedral convex function, the three sets coincide, recovering the classical polyhedral theorem.

\begin{corollary}\label{cor:polyhedral-recovery}
If $f$ is finite and polyhedral convex, then
\begin{equation}\label{eq:polyhedral-equality}
  \ran\partial f=\dom f^*=\overline{\dom f^*}.
\end{equation}
Consequently, every objective copositive on $E_f^\infty$ is bounded below and attains its minimum on $E_f$.
\end{corollary}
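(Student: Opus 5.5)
The plan is to show that for a finite polyhedral convex $f$ the conjugate $f^*$ is itself polyhedral, with a polyhedral (hence closed) domain on which $\partial f^*$ is everywhere nonempty. The consequence then follows directly from \Cref{thm:conjugate-classification}.

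\textbf{Step 1: an explicit formula for $f^*$.} Since $f$ is finite and polyhedral, I will write it as a finite maximum of affine functions,
\[
  f(z)=\max_{1\le i\le m}\{p_i(z)+\beta_i\},
\]
which is the standard representation of a finite polyhedral convex function \cite[Section~19]{R1970}. Its epigraph is then the intersection of the half-spaces $t\ge p_i(z)+\beta_i$. By linear programming duality, or directly from \cite[Theorem~19.2]{R1970}, the conjugate is
\[
  f^*(p)=\min\Big\{-\sum_i\lambda_i\beta_i:\lambda\ge0,\ \sum_i\lambda_i=1,\ \sum_i\lambda_ip_i=p\Big\}.
\]
From this formula, $\dom f^*=\operatorname{conv}\{p_1,\dots,p_m\}$. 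This set is a polytope, so it is closed, which gives $\dom f^*=\overline{\dom f^*}$. The same formula shows that $f^*$ is polyhedral convex on this domain.

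\textbf{Step 2: $\partial f^*$ is nonempty on all of $\dom f^*$.} A proper polyhedral convex function has a nonempty subdifferential at every point of its domain \cite[Theorem~23.10]{R1970}; this is the main substantive step. I also give a self-contained route, which is the one I would write out. Fix $p\in\dom f^*$. Then $\sup_z\{p(z)-f(z)\}$ is the optimal value of the linear program
\[
  \sup\{p(z)-t:\ t\ge p_i(z)+\beta_i\ \text{for all } i\}.
\]
This value is finite because $p\in\dom f^*$. A linear program with finite optimal value attains it, so some $z$ achieves the supremum, i.e. $p\in\partial f(z)$. Together with the identity $\ran\partial f=\dom\partial f^*$ already recorded in \Cref{thm:conjugate-classification}(iv), this yields $\dom f^*\subseteq\ran\partial f$. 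The reverse inclusions $\ran\partial f\subseteq\dom f^*\subseteq\overline{\dom f^*}$ come from \Cref{cor:three-regions}, so \eqref{eq:polyhedral-equality} follows.

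\textbf{Step 3: the consequence.} Suppose an objective $\ell_{a,b}$ is copositive on $E_f^\infty$. I split into cases.

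\begin{itemize}
\item \emph{Case $a>0$.} By \Cref{thm:conjugate-classification}(ii), $p=-b/a\in\overline{\dom f^*}$. By \eqref{eq:polyhedral-equality}, $p\in\ran\partial f$. Parts (iii) and (iv) of the theorem then give that $\ell_{a,b}$ is bounded below and attains its minimum.
\item \emph{Case $a<0$.} The direction $(1,0)$ lies in $E_f^\infty$ and $\ell_{a,b}(1,0)=a<0$, so copositivity fails; this case does not occur.
\item \emph{Case $a=0$, $b\neq0$.} Take $w$ with $b(w)<0$. Since $f^\infty(w)<\infty$ for a polyhedral $f$ (it is the maximum of the $p_i(w)$), the point $(f^\infty(w),w)$ lies in $E_f^\infty$ and $\ell_{0,b}(f^\infty(w),w)=b(w)<0$. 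So copositivity fails here as well.
\item \emph{Case $a=0$, $b=0$.} This is the zero objective, which is attained everywhere.
\end{itemize}

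The only place where polyhedrality genuinely enters is attainment in Step 2. I plan to handle it through linear-programming attainment, the classical existence theorem for bounded linear programs, rather than reproving it.
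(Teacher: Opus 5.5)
Your proof is correct and follows essentially the paper's route. You write $f$ as a finite maximum of affine functions, note that $\dom f^*=\operatorname{conv}\{p_1,\dots,p_m\}$ is a compact polytope, and then use subdifferentiability of polyhedral functions on their domains (your LP-attainment argument is the primal form of the same fact) together with subgradient inversion. Your explicit treatment of $a\le0$ adds a step the paper's proof leaves implicit: the case $a=0$, $b\neq0$ really does need finiteness of $f^\infty$, since for $f(z)=z^2$ the objective $\ell_{0,1}$ is copositive on $E_f^\infty$ yet unbounded below.
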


\begin{proof}
A finite polyhedral convex function is the maximum of finitely many affine functions.  Its conjugate has a compact polyhedral domain, and a proper polyhedral convex function is subdifferentiable at every point of its domain; see \cite[Sections~19 and 23]{R1970}.  Hence $\dom\partial f^*=\dom f^*$, and \eqref{eq:polyhedral-equality} follows from conjugate subgradient inversion.
\end{proof}

Nonattainment also has a directional signature in primal space.  Fix any norm $\norm{\cdot}_0$ on $X$.  Recall that $\mathcal C_f=\overline{\dom f^*}$.  For $p\in\mathcal C_f$, its normal cone is
\[
  N_{\mathcal C_f}(p):=
  \{w\in X:\widetilde p(w)\leq p(w)\text{ for every }\widetilde p\in\mathcal C_f\}.
\]

\begin{theorem}\label{thm:minimizing-directions}
Assume $p\in\dom f^*\setminus\ran\partial f$ and let $(z_k)$ satisfy
\begin{equation}\label{eq:min-sequence-f}
  f(z_k)-p(z_k)\longrightarrow-f^*(p).
\end{equation}
Then $\norm{z_k}_0\to\infty$.  Every cluster point $u$ of $z_k/\norm{z_k}_0$ satisfies $f^\infty(u)=p(u)$ and $u\in N_{\mathcal C_f}(p)$.  Thus $(f^\infty(u),u)$ is a nonzero recession direction on which the corresponding epigraph objective vanishes.
\end{theorem}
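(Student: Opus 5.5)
We first show that $\norm{z_k}_0\to\infty$, then pass to the limit in the normalized directions, and finally convert the resulting identity into the normal-cone statement using \eqref{eq:recession-support}. Write $g(z):=f(z)-p(z)+f^*(p)$. This function is finite, convex and continuous, satisfies $g\geq0$ by Fenchel's inequality, and $g(z_k)\to0$ by \eqref{eq:min-sequence-f}. Suppose, to the contrary, that some subsequence of $(z_k)$ stays bounded. Extract a further subsequence converging to some $\bar z$. Continuity gives $g(\bar z)=0$, so equality holds in Fenchel's inequality and $p\in\partial f(\bar z)$. This contradicts $p\notin\ran\partial f$, as in part (iv) of \Cref{thm:conjugate-classification}. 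Hence $\norm{z_k}_0\to\infty$.

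Next, fix a cluster point $u$ of $u_k:=z_k/\norm{z_k}_0$, say along a subsequence, so that $\norm u_0=1$. Set $r_k:=\norm{z_k}_0$. For fixed $\lambda>0$ and $k$ large enough that $r_k\geq\lambda$, convexity of $g$ on the segment from $0$ to $z_k$ gives
\[
  g(\lambda u_k)\leq\Bigl(1-\frac{\lambda}{r_k}\Bigr)g(0)+\frac{\lambda}{r_k}\,g(z_k).
\]
Let $k\to\infty$ and use continuity. Since $g(z_k)$ stays bounded and $r_k\to\infty$, we obtain $g(\lambda u)\leq g(0)$ for every $\lambda>0$. Divide by $\lambda$ and send $\lambda\to\infty$. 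By \eqref{eq:recession-function} applied at $z=0$, and because subtracting a linear function shifts the recession function, this yields
\[
  g^\infty(u)=f^\infty(u)-p(u)\leq0.
\]
For the reverse inequality, $p\in\dom f^*\subseteq\mathcal C_f$ and \eqref{eq:recession-support} give $p(u)\leq\sigma_{\mathcal C_f}(u)=f^\infty(u)$. Therefore $f^\infty(u)=p(u)$.

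The normal-cone claim now follows by rewriting this identity. From \eqref{eq:recession-support}, $\sup_{\widetilde p\in\mathcal C_f}\widetilde p(u)=f^\infty(u)=p(u)$, so $\widetilde p(u)\leq p(u)$ for every $\widetilde p\in\mathcal C_f$, which is exactly $u\in N_{\mathcal C_f}(p)$. For the final sentence, part (i) of \Cref{thm:conjugate-classification} gives $E_f^\infty=\epi f^\infty$, so $(f^\infty(u),u)\in E_f^\infty$, and this direction is nonzero because $u\neq0$. Along it, the objective takes the value
\[
  a f^\infty(u)+b(u)=a\bigl(f^\infty(u)-p(u)\bigr)=0.
\]

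The only step that needs some care is the limit passage in the second paragraph. One could instead take the ratio $g(z_k)/r_k$ directly, but the convexity-interpolation route avoids that entirely. It uses only boundedness of $g(z_k)$ together with the independence of the recession limit from the base point, so no uniformity in direction is required.
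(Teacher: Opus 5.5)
Your proof is correct. The escape step and the final rewriting of $f^\infty(u)=p(u)$ as $u\in N_{\mathcal C_f}(p)$ match the paper. The route to the key inequality $f^\infty(u)\le p(u)$ is genuinely different.

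The paper argues in conjugate space. For every $\widetilde p\in\dom f^*$ it uses the affine minorant $f\ge\widetilde p-f^*(\widetilde p)$ to get $\liminf_k f(z_k)/\norm{z_k}_0\ge\widetilde p(u)$. Taking the supremum via \eqref{eq:recession-support} bounds this $\liminf$ below by $f^\infty(u)$, and the paper then compares with $f(z_k)/\norm{z_k}_0-p(u_k)\to0$. Non-uniformity in the direction causes no trouble there, because the minorants are linear.

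You instead run the classical primal argument that a cluster direction of a sequence lying in a sublevel set is a recession direction of that sublevel set. Convex interpolation between $0$ and $z_k$ gives $g(\lambda u)\le g(0)$ for every $\lambda>0$. The limit definition \eqref{eq:recession-function} at base point $0$ then gives $g^\infty(u)=f^\infty(u)-p(u)\le0$.

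Your route uses the conjugate representation only for the reverse inequality and the normal-cone rewriting. It also yields a slightly stronger intermediate fact: the whole ray $\{\lambda u:\lambda\geq0\}$ lies in the sublevel set $\{g\le g(0)\}$. The paper's route is a little more direct once \eqref{eq:recession-support} is available, and it stays within the slope-space picture that organizes the section. You also check explicitly the final sentence about $(f^\infty(u),u)$, which the paper leaves implicit.
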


\begin{proof}
If $(z_k)$ had a bounded subsequence, continuity of $f-p$ would produce a minimizer, contradicting $p\notin\ran\partial f$.  Hence $\norm{z_k}_0\to\infty$.  Pass to a subsequence such that
\[
  u_k:=\frac{z_k}{\norm{z_k}_0}\longrightarrow u,
  \qquad \norm{u}_0=1.
\]
For every $\widetilde p\in\dom f^*$, Fenchel's inequality gives
\[
  f(z_k)\geq \widetilde p(z_k)-f^*(\widetilde p).
\]
After division by $\norm{z_k}_0$ and passage to the limit,
\[
  \liminf_{k\to\infty}
  \frac{f(z_k)}{\norm{z_k}_0}
  \geq \widetilde p(u).
\]
Taking the supremum over $\widetilde p$ and using \eqref{eq:recession-support} yields a lower bound by $f^\infty(u)$.  On the other hand, \eqref{eq:min-sequence-f} implies
\[
  \frac{f(z_k)}{\norm{z_k}_0}-p(u_k)\longrightarrow0,
\]
so $p(u)\geq f^\infty(u)$.  Since $p\in\mathcal C_f$, the support-function inequality gives $p(u)\leq f^\infty(u)$.  Equality follows.  The identity $f^\infty(u)=\sigma_{\mathcal C_f}(u)=p(u)$ is equivalent to $u\in N_{\mathcal C_f}(p)$.
\end{proof}

\begin{remark}\label{rem:directional-converse}
The theorem gives a necessary asymptotic direction but not a converse for arbitrary convex functions.  A normal direction of $\mathcal C_f$ may fail to generate a minimizing sequence because lower-order geometry can obstruct approach to the conjugate value.  In the radial class below, symmetry and the scalar defect remove this ambiguity and produce canonical minimizing sequences in every norming direction.
\end{remark}

\section{Facewise asymptotic selection under inward tilting}\label{sec:facewise}

When the critical normal cone contains more than one ray, the preceding theorem does not determine which escape direction is selected.  A uniform asymptotic expansion at infinity resolves this question for inward perturbations of the objective.

Fix a norm $\norm{\cdot}_0$ on $X$ and let 
\[
S_0:=\{u\in X:\norm{u}_0=1\}.
\]
Let $f:X\to\R$ be finite and convex, choose $p_0\in\dom f^*\setminus\ran\partial f$, and define the nonnegative boundary gap 
\[
g(x):=f(x)-p_0(x)+f^*(p_0). 
\]
Then $\inf_X g=0$ and the infimum is not attained.  On $S_0$ set 
\[
\Psi(u):=f^\infty(u)-p_0(u)\geq0
\]
and define the critical face 
\[
\cU:=\{u\in S_0:\Psi(u)=0\}.
\]
Since $\mathcal C_f=\overline{\dom f^*}$ and $f^\infty=\sigma_{\mathcal C_f}$, we have $\cU=N_{\mathcal C_f}(p_0)\cap S_0$.  Thus $\cU$ is the normalized set of critical normal directions at $p_0$, and by \Cref{thm:minimizing-directions} every cluster direction of a minimizing sequence belongs to $\cU$.

To capture the first nonvanishing correction to the recession geometry, we impose a uniform expansion in the direction variable; the critical set $\cU$ may therefore have positive dimension.

\begin{assumption}\label{ass:facewise-expansion}
The function $\Psi$ is finite and continuous on $S_0$, and the set $\cU$ is nonempty.  There exist $\nu>0$ and a continuous function $\kappa:S_0\to(0,\infty)$ such that
\begin{equation}\label{eq:uniform-facewise-expansion}
  \sup_{u\in S_0}
  \left|
    r^\nu\bigl(g(ru)-r\Psi(u)\bigr)-\kappa(u)
  \right|
  \longrightarrow0
  \qquad(r\to\infty).
\end{equation}
\end{assumption}

\paragraph{Sufficient conditions for \Cref{ass:facewise-expansion}.}
Assumption~\ref{ass:facewise-expansion} can be checked directly in several common classes.  We record two sufficient conditions that cover the examples below.

First, consider a finite sum of asymptotically linear norm profiles
\begin{equation}\label{eq:profile-sum-template}
  f(x)=\ell(x)+c+\sum_{i=1}^m \phi_i(\rho_i(x)),
\end{equation}
where $\ell\in X^*$, each $\rho_i$ is a norm on $X$, each $\phi_i:[0,\infty)\to\R$ is finite, convex, and nondecreasing, and, for a common exponent $\nu>0$,
\begin{equation}\label{eq:profile-sum-asymptotic}
  \phi_i(s)=L_i s+\gamma_i+k_i s^{-\nu}+o(s^{-\nu})
  \qquad(s\to\infty),
\end{equation}
with $L_i\geq0$, $k_i\geq0$, and not all $k_i$ zero.  Let $p_0\in\dom f^*\setminus\ran\partial f$ satisfy
\begin{equation}\label{eq:profile-sum-normalization}
  c+\sum_{i=1}^m\gamma_i+f^*(p_0)=0.
\end{equation}
Then norm equivalence on the compact sphere $S_0$ gives constants $0<m_i\leq \rho_i(u)\leq M_i$ for $u\in S_0$.  Substituting $s=r\rho_i(u)$ into \eqref{eq:profile-sum-asymptotic} therefore makes every remainder uniform in $u$, and \Cref{ass:facewise-expansion} follows with
\begin{equation}\label{eq:profile-sum-Psi-kappa}
  \Psi(u)=\ell(u)+\sum_{i=1}^mL_i\rho_i(u)-p_0(u),
  \qquad
  \kappa(u)=\sum_{i=1}^m k_i\rho_i(u)^{-\nu}.
\end{equation}
Terms with faster decay can be absorbed into the remainder.  This template includes \Cref{ex:nonradial-multidimensional-face} and, with one norm, the ellipsoidal model in \Cref{cor:ellipsoid-facewise}.

A second criterion applies when $X=\R^n$ with its Euclidean pairing and $f$ is smooth along large rays.  Suppose that $f$ is $C^1$ outside a ball, $f^\infty$ is finite on $S_0$, and for some continuous $\kappa:S_0\to(0,\infty)$,
\begin{align}
  f(ru)-r f^\infty(u)&\longrightarrow -f^*(p_0)
  &&\text{uniformly for }u\in S_0,\label{eq:smooth-facewise-constant}\\
  \ip{\nabla f(ru)}{u}-f^\infty(u)
  &=-\nu\kappa(u)r^{-\nu-1}+o(r^{-\nu-1})
  &&\text{uniformly for }u\in S_0.\label{eq:smooth-facewise-derivative}
\end{align}
Integrating \eqref{eq:smooth-facewise-derivative} from $r$ to infinity and using \eqref{eq:smooth-facewise-constant} gives
\[
  f(ru)-r f^\infty(u)+f^*(p_0)
  =\kappa(u)r^{-\nu}+o(r^{-\nu})
\]
uniformly on $S_0$, which is exactly \eqref{eq:uniform-facewise-expansion}.  A $C^2$ variant replaces \eqref{eq:smooth-facewise-derivative} by the uniform expansion
$\ip{\nabla^2 f(ru)u}{u}=\nu(\nu+1)\kappa(u)r^{-\nu-2}+o(r^{-\nu-2})$, together with the limiting radial slope and \eqref{eq:smooth-facewise-constant}; integrating twice gives the same conclusion.

The set $\cU$ has an analogy with active-manifold identification and partial smoothness, although the setting is different.  Partial smoothness is a local theory around a finite critical point: under suitable regularity, nearby critical points or algorithmic iterates identify a smooth active manifold \cite{L2002,LW2011}.  Here the boundary slope $p_0$ has no primal critical point at all.  The relevant ``active set'' is instead the normal face at infinity $N_{\mathcal C_f}(p_0)\cap S_0$, and \Cref{ass:facewise-expansion} supplies a uniform higher-order model on that set.  Thus \Cref{thm:facewise-selection} gives asymptotic directional identification rather than finite identification of a manifold.  On a smooth stratum of $\cU$, partial smoothness may provide a finer local analysis after a direction has been selected.

Assumption~\ref{ass:facewise-expansion} is also different from a constraint qualification or a facial-reduction hypothesis.  Facial reduction changes the representation of a problem to recover regularity or strong duality; see, for example, \cite{LLL2025}.  Here no such repair is needed: small inward tilts are coercive, and the SOCP model satisfies Slater's condition even at the limiting boundary.  The failure at $p_0$ is instead the absence of a finite primal optimizer.  The assumption quantifies how the objective gap approaches its recession face, and the coefficient $\kappa$ and exponent $\nu$ then determine the escape rate.
\medskip

The assumption allows multidimensional critical faces for functions that are not radial of the form $\phi\circ\rho$.

\begin{example}[A nonradial model with a two-dimensional critical face]\label{ex:nonradial-multidimensional-face}
Let $X=\R^3$, equip $X$ with the Euclidean reference norm $\norm{\cdot}_0=\norm{\cdot}_2$, and define two polyhedral norms by
\[
  \rho_1(x):=\max\{|x_1|,|x_2|,|x_3|\},
  \qquad
  \rho_2(x):=\max\{|x_1|,2|x_2|,3|x_3|\}.
\]
Consider the finite convex function
\begin{equation}\label{eq:nonradial-face-example-f}
  f(x):=\sqrt{1+\rho_1(x)^2}+\sqrt{1+\rho_2(x)^2},
  \qquad x\in\R^3,
\end{equation}
and the boundary slope $p_0(x):=2x_1$.
Then $p_0\in\dom f^*\setminus\ran\partial f$, one has $f^*(p_0)=0$, and \Cref{ass:facewise-expansion} holds with $\nu=1$ and
\begin{equation}\label{eq:nonradial-face-example-kappa}
  \kappa(u)
  =\frac{1}{2\rho_1(u)}+\frac{1}{2\rho_2(u)},
  \qquad u\in S_0=S^2.
\end{equation}
Moreover, the critical face is the two-dimensional spherical patch
\begin{equation}\label{eq:nonradial-face-example-U}
  \cU
  =\left\{
    u\in S^2:
    u_1\geq 2|u_2|,\quad
    u_1\geq 3|u_3|
  \right\}.
\end{equation}
In particular, $\cU$ contains a nonempty relatively open subset of $S^2$ and is not a single direction.

Indeed, since $\sqrt{1+s^2}>s$ for every $s\geq0$ and $\rho_i(x)\geq|x_1|$, we have
\[
  f(x)-p_0(x)>0
  \qquad(x\in\R^3).
\]
On the other hand, along the ray $x=re_1$,
\[
  f(re_1)-p_0(re_1)
  =2\bigl(\sqrt{1+r^2}-r\bigr)
  \longrightarrow0.
\]
Consequently,
\[
  f^*(p_0)
  =\sup_{x\in\R^3}\{p_0(x)-f(x)\}=0,
\]
but the supremum is not attained.  By Fenchel equality this is equivalent to
$p_0\notin\ran\partial f$.

The recession function is
\[
  f^\infty(u)=\rho_1(u)+\rho_2(u),
\]
so
\[
  \Psi(u)=\rho_1(u)+\rho_2(u)-2u_1.
\]
Because $\rho_i(u)\geq |u_1|\geq u_1$, equality $\Psi(u)=0$ holds if and only if
\[
  \rho_1(u)=\rho_2(u)=u_1.
\]
For the two norms above this is exactly the condition in
\eqref{eq:nonradial-face-example-U}.  The dimension can be seen explicitly by writing
\[
  s:=\frac{u_2}{u_1},
  \qquad
  t:=\frac{u_3}{u_1}.
\]
Then
\[
  |s|\leq\frac12,
  \qquad
  |t|\leq\frac13,
  \qquad
  u=\frac{(1,s,t)}{\sqrt{1+s^2+t^2}},
\]
which parametrizes $\cU$ by the rectangle
$[-1/2,1/2]\times[-1/3,1/3]$.  Hence $\cU$ is two-dimensional.

It remains to verify the uniform expansion.  For $u\in S^2$ and $r>0$,
\begin{align*}
  g(ru)-r\Psi(u)
  &={}
  \sqrt{1+r^2\rho_1(u)^2}-r\rho_1(u)
  +
  \sqrt{1+r^2\rho_2(u)^2}-r\rho_2(u).
\end{align*}
Since $\rho_1(u)\geq 1/\sqrt3$ on $S^2$ and $\rho_2(u)\geq\rho_1(u)$, rationalization gives, uniformly in $u\in S^2$,
\[
  r\bigl(\sqrt{1+r^2\rho_i(u)^2}-r\rho_i(u)\bigr)
  =
  \frac{1}{\sqrt{\rho_i(u)^2+r^{-2}}+\rho_i(u)}
  \longrightarrow
  \frac{1}{2\rho_i(u)}.
\]
Summing the two limits proves \eqref{eq:uniform-facewise-expansion} with
$\nu=1$ and \eqref{eq:nonradial-face-example-kappa}.

The function in \eqref{eq:nonradial-face-example-f} is also nonradial: it cannot be written as $\phi(\rho(x))$ for a norm $\rho$ and a one-variable profile $\phi$.  Indeed, $t\mapsto f(te_1)$ is strictly increasing on $[0,\infty)$, so any such representation would force $\phi$ to be strictly increasing on $[0,\infty)$.  The restrictions to any two rays would therefore differ only by a constant rescaling of the radial variable.  In particular, for some $c>0$ one would have
\[
  f(te_2)=f(cte_1)
  \qquad(t\geq0).
\]
But
\[
  f(te_1)=2\sqrt{1+t^2}
  =2+t^2+O(t^4),
  \qquad
  f(te_2)=\sqrt{1+t^2}+\sqrt{1+4t^2}
  =2+\frac52t^2+O(t^4),
\]
whereas the large-$t$ asymptotics are
\[
  f(te_1)=2t+o(t),
  \qquad
  f(te_2)=3t+o(t).
\]
The large-$t$ comparison would require $c=3/2$, while the quadratic expansion at the origin would require $c^2=5/2$, a contradiction.

The example also makes the directional selection mechanism of
\Cref{thm:facewise-selection} explicit.  Take $h(x):=x_1+x_2+x_3$.
For $u\in\cU$,
\[
  h(u)
  \geq u_1\left(1-\frac12-\frac13\right)
  =\frac{u_1}{6}>0,
\]
so the inward-tilt condition holds.  On $\cU$ we have
$\rho_1(u)=\rho_2(u)=u_1$, hence $\kappa(u)=1/u_1$.  Writing
$s=u_2/u_1$ and $t=u_3/u_1$, the selection functional satisfies
\[
  M(u)^2=\kappa(u)h(u)=1+s+t.
\]
Therefore $M$ has the unique minimizer
\[
  u_*
  =\frac{(1,-1/2,-1/3)}{\sqrt{1+1/4+1/9}}
  =\left(\frac67,-\frac37,-\frac27\right).
\]
Thus a two-dimensional critical face is reduced by the inward tilt to a single escaping direction.  In this case
\[
  M_*=\frac1{\sqrt6},
  \qquad
  m(\mu)\sim\sqrt{\frac23}\,\mu^{1/2},
  \qquad
  r_\mu\sim\frac{7}{\sqrt6}\,\mu^{-1/2}.
\]
\end{example}

Let $h\in X^*$ be an inward tilt satisfying
\begin{equation}\label{eq:inward-positive}
  h(u)>0
  \qquad\text{for every }u\in\cU.
\end{equation}
For $\mu>0$, set $p_\mu:=p_0-\mu h$ and consider the perturbed convex problem
\begin{equation}\label{prob:facewise-tilt}
  m(\mu):=
  \inf_{x\in X}
  \{f(x)-p_\mu(x)+f^*(p_0)\}
  =\inf_{x\in X}\{g(x)+\mu h(x)\}.
\end{equation}
Define on the critical face
\[
  M(u):=\kappa(u)^{1/(\nu+1)}h(u)^{\nu/(\nu+1)},
  \qquad
  M_*:=\min_{u\in\cU}M(u),
\]
and
\[
  \cU_*:=\operatorname*{argmin}_{u\in\cU}M(u).
\]
The exponents in $M$ are the weighted geometric-mean exponents arising from the one-dimensional balance
\[
  \min_{s>0}\{s h(u)+\kappa(u)s^{-\nu}\}=C_\nu M(u),
\]
which is the radial optimization performed in the proof below.  Since $\cU$ is compact and both factors in $M$ are positive and continuous, $M_*>0$ and $\cU_*$ is nonempty and compact.

The inward perturbation selects a direction through both the lower-order defect and the direction of the cost perturbation.

\begin{theorem}\label{thm:facewise-selection}
Let \Cref{ass:facewise-expansion} hold and suppose \eqref{eq:inward-positive} is satisfied.  Then there exists $\mu_0>0$ such that, for every $0<\mu<\mu_0$, the function $g+\mu h$ is coercive and problem \eqref{prob:facewise-tilt} has a nonempty compact minimizer set.

Let $x_\mu$ be any minimizer and, for sufficiently small $\mu$, set $r_\mu:=\norm{x_\mu}_0$ and $u_\mu:=x_\mu/r_\mu$.  Define $\alpha:=\nu/(\nu+1)$ and $C_\nu:=(\nu+1)\nu^{-\nu/(\nu+1)}$.  Then, as $\mu\downarrow0$,
\begin{equation}\label{eq:facewise-value-law}
  m(\mu)
  \sim C_\nu M_*\,\mu^\alpha.
\end{equation}
Moreover,
\begin{equation}\label{eq:facewise-direction-selection}
  \dist(u_\mu,\cU_*)\longrightarrow0,
  \qquad
  \frac{\Psi(u_\mu)}{\mu}\longrightarrow0.
\end{equation}
Every sequence $\mu_j\downarrow0$ has a subsequence for which $u_{\mu_j}\to\bar u\in\cU_*$, and along any such subsequence
\begin{equation}\label{eq:facewise-radius-law}
  \mu_j^{1/(\nu+1)}r_{\mu_j}
  \longrightarrow
  \left(\frac{\nu\kappa(\bar u)}{h(\bar u)}\right)^{1/(\nu+1)}.
\end{equation}
In particular, if $\cU_*=\{u_*\}$ is a singleton, then the whole family satisfies
\begin{equation}\label{eq:facewise-unique-selection}
  u_\mu\longrightarrow u_*,
  \qquad
  r_\mu\sim
  \left(\frac{\nu\kappa(u_*)}{h(u_*)}\right)^{1/(\nu+1)}
  \mu^{-1/(\nu+1)}.
\end{equation}
\end{theorem}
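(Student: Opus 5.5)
The plan is to reduce everything to polar coordinates $x=ru$ with $u\in S_0$. Write the uniform expansion as
\[
g(ru)=r\Psi(u)+r^{-\nu}\bigl(\kappa(u)+\omega(r)\bigr),\qquad \sup_{u}|\omega|\to0,
\]
so that
\[
g(ru)+\mu h(ru)=r\bigl(\Psi(u)+\mu h(u)\bigr)+r^{-\nu}(\kappa(u)+\omega).
\]

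\textbf{Step 1: coercivity.} By continuity of $h$ and compactness, \eqref{eq:inward-positive} gives an open neighborhood $V$ of $\cU$ in $S_0$ with $h\geq h_0>0$ on $V$. On $S_0\setminus V$, continuity of $\Psi$ gives $\Psi\geq\psi_0>0$. For $\mu<\mu_0:=\psi_0/(2\max|h|)$ we then have $\Psi+\mu h\geq c(\mu)>0$ on all of $S_0$. Since $\kappa+\omega$ is bounded for $r\geq R$, this yields
\[
g(ru)+\mu h(ru)\geq c(\mu)r-C,
\]
so $g+\mu h$ is coercive, and a finite continuous convex coercive function has a nonempty compact minimizer set. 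Also $m(\mu)\to0$ and $m(\mu)>0$. Positivity holds because $g>0$ everywhere (nonattainment), and the lower bound below gives $g+\mu h>0$ for large $r$ as well.

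\textbf{Step 2: upper bound.} Fix $u\in\cU_*$ and test with $r=s\mu^{-1/(\nu+1)}$. Since $\Psi(u)=0$,
\[
m(\mu)\leq \mu^{\alpha}\bigl(s h(u)+\kappa(u)s^{-\nu}+o(1)\bigr).
\]
Optimizing over $s$ gives $\limsup \mu^{-\alpha}m(\mu)\leq C_\nu M_*$.

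\textbf{Step 3: escape and lower bound.} First, $r_\mu\to\infty$. Otherwise, along a subsequence $x_\mu\to\bar x$, and then $g(\bar x)\leq\liminf m(\mu)=0$, contradicting nonattainment.

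Next set $s_\mu:=\mu^{1/(\nu+1)}r_\mu$. Since $\Psi\geq0$ and $\kappa$ is bounded below by $\kappa_0>0$,
\[
m(\mu)\geq r_\mu\Psi(u_\mu)+\mu^{\alpha}\bigl(s_\mu h(u_\mu)+(\kappa(u_\mu)+o(1))s_\mu^{-\nu}\bigr).
\]
This is the main obstacle: $h(u_\mu)$ may be negative away from $\cU$, so the bracket is not obviously bounded below. I would handle it by splitting on whether $u_\mu\in V$.

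If $u_\mu\notin V$, the first term dominates: $r_\mu\Psi+\mu r_\mu h\geq r_\mu\psi_0/2\to\infty$, contradicting $m(\mu)\to0$. So $u_\mu\in V$ eventually and $h(u_\mu)\geq h_0$.

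With $h(u_\mu)\geq h_0$, the upper bound $m\lesssim\mu^\alpha$ forces $s_\mu$ to stay bounded and bounded away from $0$. The displayed inequality then gives $r_\mu\Psi(u_\mu)=O(\mu^\alpha)$, i.e.
\[
\Psi(u_\mu)=O\bigl(\mu^{\alpha}/r_\mu\bigr)=O(\mu).
\]
In particular $\Psi(u_\mu)\to0$, so every cluster point $\bar u$ of $u_\mu$ lies in $\cU$.

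\textbf{Step 4: conclusion along subsequences.} Take $\mu_j\downarrow 0$ with $u_{\mu_j}\to\bar u$ and $s_{\mu_j}\to\bar s\in(0,\infty)$. Dropping the nonnegative $\Psi$-term,
\[
\liminf\mu^{-\alpha}m\geq \bar s h(\bar u)+\kappa(\bar u)\bar s^{-\nu}\geq C_\nu M(\bar u)\geq C_\nu M_*.
\]
Combined with Step 2, this proves \eqref{eq:facewise-value-law}. It also forces equality throughout, which gives three consequences:
\begin{itemize}
\item $\bar u\in\cU_*$, which yields $\dist(u_\mu,\cU_*)\to0$;
\item $\bar s$ is the unique minimizer $(\nu\kappa(\bar u)/h(\bar u))^{1/(\nu+1)}$ of the strictly convex scalar balance, which yields \eqref{eq:facewise-radius-law};
\item $\mu^{-\alpha}r_\mu\Psi(u_\mu)\to0$, i.e. $s_\mu\Psi(u_\mu)/\mu\to0$, which yields $\Psi(u_\mu)/\mu\to0$ since $s_\mu$ is bounded below.
\end{itemize}
The singleton case \eqref{eq:facewise-unique-selection} follows because every subsequence has the same limit.
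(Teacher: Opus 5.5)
Your proposal is correct and follows essentially the same route as the paper's proof. Both arguments use the polar form of the uniform expansion, an upper bound from testing along a direction in $\cU_*$ at scale $\mu^{-1/(\nu+1)}$, escape by nonattainment, two-sided radius bounds, the localization $\Psi(u_\mu)=O(\mu)$, and forced equality in the scalar balance $s h(u)+\kappa(u)s^{-\nu}\geq C_\nu M(u)\geq C_\nu M_*$. The differences are cosmetic: you first localize $u_\mu$ into the neighborhood $V$ of $\cU$ before bounding $s_\mu$, whereas the paper uses the global bound $\Psi+\mu h\geq c_0\mu$ on $S_0$; and you recover $\Psi(u_\mu)/\mu\to0$ from the forced equality, whereas the paper carries a subsequential limit $\theta$ of $\Psi(u_\mu)/\mu$.
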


\begin{proof}
We divide the proof into six steps.

\emph{Step 1: positivity of the tilted recession density.}
Since $\cU$ is compact and $h$ is positive on $\cU$, there are a neighborhood $V$ of $\cU$ in $S_0$ and $h_{\min}>0$ such that $h(u)\geq h_{\min}$ for $u\in V$.  On the compact set $S_0\setminus V$, the continuous function $\Psi$ is strictly positive; set $\delta_\Psi:=\min_{S_0\setminus V}\Psi>0$.
Let $H_0:=\max_{S_0}|h|$.  After decreasing $\mu_0$ if necessary, for $0<\mu<\mu_0$ we have $\mu H_0\leq\delta_\Psi/2$.  Therefore
\begin{equation}\label{eq:tilted-density-lower}
  \Theta_\mu(u):=\Psi(u)+\mu h(u)
  \geq
  \begin{cases}
    \mu h_{\min},&u\in V,\\
    \delta_\Psi/2,&u\in S_0\setminus V.
  \end{cases}
\end{equation}
In particular, $\Theta_\mu(u)>0$ on $S_0$.  By the uniform expansion \eqref{eq:uniform-facewise-expansion}, for every sufficiently large $r$,
\begin{equation}\label{eq:coercive-expansion}
  g(ru)+\mu h(ru)
  =r \Theta_\mu(u)+r^{-\nu}\bigl(\kappa(u)+o(1)\bigr),
\end{equation}
where the $o(1)$ is uniform in $u$.  For each fixed $\mu>0$, \eqref{eq:tilted-density-lower} implies that the right-hand side tends to $+\infty$ uniformly in $u$.  Thus $g+\mu h$ is coercive.  Since it is finite and continuous, its minimizer set is nonempty and compact.

\emph{Step 2: an upper bound at the natural scale.}
Fix $u_*\in\cU_*$.  For $s>0$ choose $r:=s\mu^{-1/(\nu+1)}$.
Because $\Psi(u_*)=0$, the uniform expansion gives
\begin{align}
  \mu^{-\alpha}\bigl(g(ru_*)+\mu h(ru_*)\bigr)
  &=s h(u_*)+\kappa(u_*)s^{-\nu}+o(1).
  \label{eq:scaled-candidate}
\end{align}
The right-hand side is minimized at
\[
  s_*(u):=
  \left(\frac{\nu\kappa(u)}{h(u)}\right)^{1/(\nu+1)},
\]
with minimum
\[
  C_\nu\kappa(u)^{1/(\nu+1)}h(u)^{\nu/(\nu+1)}.
\]
Using $u=u_*$ and $s=s_*(u_*)$ in \eqref{eq:scaled-candidate} yields
\begin{equation}\label{eq:m-upper-facewise}
  \limsup_{\mu\downarrow0}
  \mu^{-\alpha}m(\mu)
  \leq C_\nu M_*.
\end{equation}
In particular, $m(\mu)=O(\mu^\alpha)$.

\emph{Step 3: minimizers escape and their radii have the correct order.}
If a sequence of minimizers $x_{\mu_j}$ remained bounded for some $\mu_j\downarrow0$, then, after taking a subsequence, $x_{\mu_j}\to\bar x$.  Optimality and the upper bound just proved give
\[
  0\leq g(x_{\mu_j})
  \leq m(\mu_j)+\mu_j|h(x_{\mu_j})|
  \longrightarrow0.
\]
Hence $g(\bar x)=0$, contradicting $p_0\notin\ran\partial f$.  Thus
\begin{equation}\label{eq:rmu-infty}
  r_\mu\longrightarrow\infty.
\end{equation}
Let $\kappa_-:=\min_{S_0}\kappa>0$.
Uniformity in \eqref{eq:uniform-facewise-expansion} and \eqref{eq:rmu-infty} imply, for all sufficiently small $\mu$,
\begin{equation}\label{eq:basic-lower-r}
  m(\mu)
  \geq r_\mu \Theta_\mu(u_\mu)
       +\frac{\kappa_-}{2}r_\mu^{-\nu}.
\end{equation}
Although $u_\mu$ has not yet been localized to $V$, the global bound
\begin{equation}\label{eq:tilted-density-global}
  \Theta_\mu(u)\geq c_0\mu
  \qquad(u\in S_0)
\end{equation}
holds for small $\mu$, with $c_0:=\min\{h_{\min},\delta_\Psi/(2\mu_0)\}>0$.  Combining \eqref{eq:m-upper-facewise}, \eqref{eq:basic-lower-r}, and \eqref{eq:tilted-density-global} gives constants $0<c<C<\infty$ such that
\begin{equation}\label{eq:rmu-two-sided}
  c\mu^{-1/(\nu+1)}
  \leq r_\mu
  \leq C\mu^{-1/(\nu+1)}
\end{equation}
for all sufficiently small $\mu$.  Indeed, the term $r_\mu^{-\nu}$ gives the lower bound on $r_\mu$, while the term $\mu r_\mu$ gives the upper bound.

\emph{Step 4: localization to the critical face.}
From \eqref{eq:coercive-expansion}, \eqref{eq:rmu-two-sided}, and $m(\mu)=O(\mu^\alpha)$ we obtain
\[
  r_\mu\Psi(u_\mu)
  \leq O(\mu^\alpha)+\mu H_0r_\mu+O(r_\mu^{-\nu})
  =O(\mu^\alpha).
\]
Since $r_\mu\geq c\mu^{-1/(\nu+1)}$, it follows that
\begin{equation}\label{eq:Psi-O-mu}
  \Psi(u_\mu)=O(\mu).
\end{equation}
Hence every cluster point of $(u_\mu)$ lies in $\cU$.

\emph{Step 5: scaled liminf and direction selection.}
Take an arbitrary sequence $\mu_j\downarrow0$.  By compactness of $S_0$ and the two-sided estimate \eqref{eq:rmu-two-sided}, after passing to a subsequence we may assume
\[
  u_{\mu_j}\to\bar u\in\cU,
  \qquad
  s_j:=\mu_j^{1/(\nu+1)}r_{\mu_j}\to\bar s\in(0,\infty),
\]
while, by \eqref{eq:Psi-O-mu},
\[
  \frac{\Psi(u_{\mu_j})}{\mu_j}\to\theta
\]
along a further subsequence for some $\theta\in[0,\infty)$.
Using the uniform expansion and dividing by $\mu_j^\alpha$ gives
\begin{align*}
  \mu_j^{-\alpha}m(\mu_j)
  &=s_j\left(
      \frac{\Psi(u_{\mu_j})}{\mu_j}
      +h(u_{\mu_j})
    \right)
    +\kappa(u_{\mu_j})s_j^{-\nu}+o(1)\\
  &\longrightarrow
    \bar s\bigl(\theta+h(\bar u)\bigr)
    +\kappa(\bar u)\bar s^{-\nu}.
\end{align*}
For every $s>0$, $u\in\cU$, and $\theta\geq0$,
\begin{align}
  s(\theta+h(u))+\kappa(u)s^{-\nu}
  &\geq s h(u)+\kappa(u)s^{-\nu}\\
  &\geq C_\nu M(u)\\
  &\geq C_\nu M_*.
  \label{eq:facewise-lower-chain}
\end{align}
Together with the upper bound \eqref{eq:m-upper-facewise}, this proves
\[
  \mu^{-\alpha}m(\mu)\longrightarrow C_\nu M_*,
\]
which is \eqref{eq:facewise-value-law}.  Equality throughout \eqref{eq:facewise-lower-chain} is now forced for every subsequential limit.  Therefore
\begin{equation}\label{eq:equality-forces}
  \theta=0,
  \qquad
  \bar u\in\cU_*,
  \qquad
  \bar s=s_*(\bar u).
\end{equation}
Since the initial sequence $\mu_j\downarrow0$ was arbitrary, \eqref{eq:equality-forces} yields both statements in \eqref{eq:facewise-direction-selection}, and every cluster direction belongs to $\cU_*$.  It also gives \eqref{eq:facewise-radius-law}.

\emph{Step 6: the uniquely selected direction.}
If $\cU_*=\{u_*\}$, compactness of the sphere and \eqref{eq:facewise-direction-selection} imply $u_\mu\to u_*$.  Then every subsequence of the scaled radii has the same limit $s_*(u_*)$ by \eqref{eq:facewise-radius-law}; hence the entire family converges to that limit.  This proves \eqref{eq:facewise-unique-selection} and completes the proof.
\end{proof}

\begin{remark}\label{rem:facewise-geometry}
The function $\Psi=f^\infty-p_0$ depends only on recession geometry, whereas $\kappa$ records the first lower-order correction.  The selected direction need not minimize the defect or the inward cost separately; it minimizes their weighted geometric mean through $M$.  In the radial case there is only one effective critical direction, so this directional competition is absent.
\end{remark}

\begin{corollary}\label{cor:ellipsoid-facewise}
Let $\delta>0$, let $f_H(z):=\sqrt{\delta^2+z^{\trans}Hz}$ with $H\succ0$, and let $p_0$ satisfy $p_0^{\trans}H^{-1}p_0=1$.  Then $p_0\in\dom f_H^*\setminus\ran\partial f_H$.  For any norm $\norm{\cdot}_0$ on $\R^n$, the expansion \eqref{eq:uniform-facewise-expansion} holds with $\nu=1$,
\begin{equation}\label{eq:ellipsoid-Psi-kappa}
  \Psi(u)=\sqrt{u^{\trans}Hu}-p_0^{\trans}u,
  \qquad
  \kappa(u)=\frac{\delta^2}{2\sqrt{u^{\trans}Hu}}.
\end{equation}
Define the normalized critical direction $u_0:=H^{-1}p_0/\norm{H^{-1}p_0}_0$.  Then the critical set is the singleton $\cU=\{u_0\}$.  Consequently, every inward tilt $p_\mu=p_0-\mu h$ with $h(u_0)>0$ produces optimizers satisfying $u_\mu\to u_0$, while $\norm{x_\mu}_0$ grows like $\mu^{-1/2}$ with the constant given by \eqref{eq:facewise-unique-selection}.
\end{corollary}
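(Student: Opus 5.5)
The plan is to verify the hypotheses of \Cref{thm:facewise-selection} for $f_H$ and $p_0$. Once the critical set is a singleton, the last sentence is just \eqref{eq:facewise-unique-selection}: $\cU_*=\cU=\{u_0\}$, and $\nu=1$ gives the exponent $-1/2$. Throughout, write $\norm{w}_H:=\sqrt{w^{\trans}Hw}$ and $\beta_H(p):=\sqrt{p^{\trans}H^{-1}p}$. These are a pair of dual norms under the Euclidean pairing, since $p^{\trans}w=(H^{-1/2}p)^{\trans}(H^{1/2}w)$.

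\emph{Step 1: the boundary status of $p_0$.} I would compute the conjugate by reducing to one radial variable, as in \Cref{sec:preliminaries}. For fixed $\norm{z}_H=s$ we have $\max p^{\trans}z=s\beta_H(p)$, so $f_H^*(p)=\sup_{s\geq0}\{s\beta_H(p)-\sqrt{\delta^2+s^2}\}$. This equals $-\delta\sqrt{1-\beta_H(p)^2}$ when $\beta_H(p)\leq1$ and $+\infty$ otherwise. In particular $p_0\in\dom f_H^*$ with $f_H^*(p_0)=0$. For the range of the subdifferential, $f_H$ is smooth with $\nabla f_H(z)=Hz/\sqrt{\delta^2+z^{\trans}Hz}$. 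Hence $\beta_H(\nabla f_H(z))=\norm{z}_H/\sqrt{\delta^2+\norm{z}_H^2}<1$ because $\delta>0$, so $p_0\notin\ran\partial f_H$.

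\emph{Step 2: the uniform expansion.} With $f_H^*(p_0)=0$, the gap is $g(x)=\sqrt{\delta^2+x^{\trans}Hx}-p_0^{\trans}x$, and $f_H^\infty(u)=\norm{u}_H$, which gives the stated $\Psi$. For $u\in S_0$, rationalization gives
\[
  r\bigl(g(ru)-r\Psi(u)\bigr)=r\bigl(\sqrt{\delta^2+r^2\norm{u}_H^2}-r\norm{u}_H\bigr)
  =\frac{\delta^2}{\sqrt{\delta^2r^{-2}+\norm{u}_H^2}+\norm{u}_H}.
\]
By norm equivalence, $\norm{u}_H$ is bounded above and below by positive constants on the compact sphere $S_0$. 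So the right side converges uniformly to $\delta^2/(2\norm{u}_H)$, which is \eqref{eq:ellipsoid-Psi-kappa} with $\nu=1$. Alternatively, this is the one-norm case of the template \eqref{eq:profile-sum-template} with $\rho=\norm{\cdot}_H$ and $\phi(s)=\sqrt{\delta^2+s^2}=s+\tfrac{\delta^2}{2}s^{-1}+o(s^{-1})$, $\gamma=0$. Continuity of $\Psi$ and positivity and continuity of $\kappa$ are immediate.

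\emph{Step 3: the critical set.} The Cauchy--Schwarz inequality in the $H$-inner product gives
\[
  p_0^{\trans}u=(H^{-1}p_0)^{\trans}Hu\leq\norm{H^{-1}p_0}_H\norm{u}_H=\beta_H(p_0)\norm{u}_H=\norm{u}_H.
\]
Hence $\Psi\geq0$. Equality holds iff $u$ is a nonnegative multiple of $H^{-1}p_0$, and on $S_0$ this leaves only $u_0$. So $\cU=\{u_0\}$ is nonempty, and \eqref{eq:inward-positive} reduces to $h(u_0)>0$. Then \Cref{thm:facewise-selection} gives $u_\mu\to u_0$ and $\norm{x_\mu}_0\sim(\kappa(u_0)/h(u_0))^{1/2}\mu^{-1/2}$.

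None of these steps is a real obstacle. The only points needing care are the strictness in Step 1, which is what rules out attainment and uses $\delta>0$, and the sign in the Cauchy--Schwarz equality case. Equality forces a \emph{positive} multiple of $H^{-1}p_0$, so the antipodal direction $-u_0$ is excluded from $\cU$.
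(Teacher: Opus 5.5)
Your proposal is correct and follows the paper's proof essentially step for step: the radial reduction of the conjugate, the uniform expansion with $\nu=1$ and $\kappa(u)=\delta^2/(2\sqrt{u^{\trans}Hu})$ (your exact rationalization identity replaces the paper's $O(r^{-3})$ Taylor expansion), the Cauchy--Schwarz equality case in the $H$-metric giving $\cU=\{u_0\}$, and an application of \Cref{thm:facewise-selection}. The only small difference is in ruling out $p_0\in\ran\partial f_H$: you show $\beta_H(\nabla f_H(z))<1$ for every $z$, whereas the paper observes that when $\rho_{H,*}(p_0)=1$ the conjugate supremum is approached only along a ray and is never attained; both arguments are short and valid.
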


\begin{proof}
To verify the boundary membership directly, write $\rho_{H,*}(p)=\sqrt{p^{\trans}H^{-1}p}$.  Cauchy--Schwarz in the $H$ metric reduces the conjugate to a one-dimensional maximization and gives
\[
  f_H^*(p)=
  \begin{cases}
  -\delta\sqrt{1-\rho_{H,*}(p)^2},&\rho_{H,*}(p)\leq1,\\
  +\infty,&\rho_{H,*}(p)>1.
  \end{cases}
\]
For $\rho_{H,*}(p)<1$ the supremum is attained at a finite point, whereas for $\rho_{H,*}(p)=1$ its value is $0$ and is approached only along the positive ray generated by $H^{-1}p$.  Thus $p_0^{\trans}H^{-1}p_0=1$ implies $p_0\in\dom f_H^*\setminus\ran\partial f_H$.  Uniformly for $u\in S_0$,
\begin{align*}
  \sqrt{\delta^2+r^2u^{\trans}Hu}
  &=r\sqrt{u^{\trans}Hu}
    +\frac{\delta^2}{2r\sqrt{u^{\trans}Hu}}
    +O(r^{-3}),
\end{align*}
where uniformity follows because $H\succ0$ and $S_0$ is compact.  Subtracting $rp_0^{\trans}u$ gives \eqref{eq:ellipsoid-Psi-kappa}.  Equality in
\[
  p_0^{\trans}u
  \leq\sqrt{p_0^{\trans}H^{-1}p_0}\sqrt{u^{\trans}Hu}
  =\sqrt{u^{\trans}Hu}
\]
holds precisely when $u$ is a positive multiple of $H^{-1}p_0$.  Intersecting this ray with $S_0$ gives the unique critical direction $u_0$.  The conclusion follows from \Cref{thm:facewise-selection} with $\nu=1$.
\end{proof}

\section{Radial attainment boundary}\label{sec:radial}

Let $\rho$ be a norm on $X$ with dual norm $\rho_*$, and assume the scalar profile satisfies the following.

\begin{assumption}\label{ass:profile}
The function $\phi:[0,\infty)\to\R$ is finite, convex, and nondecreasing, and $L:=\lim_{r\to\infty}\phi(r)/r\in(0,\infty)$.
\end{assumption}

Define $C_{\phi,\rho}:=\{(t,z):t\geq\phi(\rho(z))\}$ and $\psi(r):=\phi(r)-Lr$.
Convexity implies
\[
 0\leq\phi(r_2)-\phi(r_1)\leq L(r_2-r_1),
 \qquad 0\leq r_1<r_2,
\]
so $\psi$ is nonincreasing; write $\gamma:=\lim_{r\to\infty}\psi(r)\in[-\infty,\infty)$.
The radial structure makes the entire boundary classification one-dimensional.

\begin{theorem}\label{thm:boundary-trichotomy}
Under \Cref{ass:profile}, $C_{\phi,\rho}^\infty=\{(s,w):s\geq L\rho(w)\}$.  Let $\ell_{a,b}(t,z)=at+b(z)$, assume $a>0$, and put $\beta:=\rho_*(b)$.  Then
\begin{equation}\label{eq:scalar-reduction}
 \inf_{C_{\phi,\rho}}\ell_{a,b}
 =\inf_{r\geq0}\{a\phi(r)-\beta r\}.
\end{equation}
Consequently:
\begin{enumerate}[label=\textup{(\roman*)}]
\item if $\beta>aL$, the value is $-\infty$;
\item if $\beta<aL$, the scalar objective is coercive and the minimum is attained;
\item if $\beta=aL$, the value is $a\gamma$.  If $\gamma=-\infty$, the problem is unbounded.  If $\gamma\in\R$, the value is attained if and only if $\psi$ reaches $\gamma$ at a finite radius, equivalently if and only if $\phi$ is eventually affine with slope $L$.  Otherwise the value is finite and unattained.
\end{enumerate}
Thus copositivity on the recession cone is equivalent to $aL\geq\beta$, but at equality it does not decide boundedness or attainment.
\end{theorem}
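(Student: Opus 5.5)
The plan is to reduce everything to the scalar problem \eqref{eq:scalar-reduction} and then read off the cases from the behaviour of $\psi$.

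\emph{Recession cone.} Set $f:=\phi\circ\rho$, which is finite and convex because $\phi$ is convex and nondecreasing and $\rho$ is convex. By \eqref{eq:epi-recession} it suffices to show $f^\infty(w)=L\rho(w)$. Take $z=0$ in \eqref{eq:recession-function}: for $w\neq0$ we have $f(\lambda w)/\lambda=\rho(w)\,\phi(\lambda\rho(w))/(\lambda\rho(w))\to L\rho(w)$, and $f^\infty(0)=0$. Hence $C_{\phi,\rho}^\infty=\{(s,w):s\geq L\rho(w)\}$.

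\emph{Scalar reduction.} Eliminate $t$ exactly as in the proof of \Cref{thm:conjugate-classification}, which gives $\inf\ell_{a,b}=\inf_z\{a\phi(\rho(z))+b(z)\}$. Split the infimum over spheres $\rho(z)=r$. By the preliminary identity $\min_{\rho(z)=r}b(z)=-r\rho_*(b)$, attained at $z=-ru$ with $u\in\mathcal D_\rho(b)$, we get \eqref{eq:scalar-reduction}. Moreover, the original problem attains its infimum if and only if the scalar problem does: from a scalar minimizer $r$ take $z=-ru$ and $t=\phi(r)$; conversely, a minimizer $(t,z)$ of $\ell_{a,b}$ yields the scalar minimizer $r=\rho(z)$.

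\emph{Cases.} Write $a\phi(r)-\beta r=a\psi(r)+(aL-\beta)r$. Since $\phi(r)/r\to L$, the scalar objective divided by $r$ tends to $aL-\beta$.
\begin{enumerate}[label=\textup{(\roman*)}]
\item If $\beta>aL$, this limit is negative, so the objective tends to $-\infty$.
\item If $\beta<aL$, the limit is positive, so the objective is coercive. Continuity of the finite convex $\phi$ on $[0,\infty)$ then gives a minimizer. Here I would note that a convex function on $[0,\infty)$ could jump upward at $0$, but a finite convex function is upper semicontinuous at an endpoint; lower semicontinuity is what we need, so I would either assume $\phi$ is continuous or replace $\phi(0)$ by its lower semicontinuous hull. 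This is a minor point, and the paper appears to treat $\phi$ as continuous.
\item If $\beta=aL$, the objective is $a\psi(r)$. Since $\psi$ is nonincreasing, its infimum equals $a\gamma$. If $\gamma=-\infty$ the problem is unbounded. If $\gamma\in\R$, attainment means $\psi(r_0)=\gamma$ for some $r_0$. Monotonicity then forces $\psi\equiv\gamma$ on $[r_0,\infty)$, that is, $\phi(r)=Lr+\gamma$ there. Conversely, if $\phi$ is eventually affine with slope $L$, then $\psi$ is eventually constant, and that constant must be $\gamma$, so the value is attained.
\end{enumerate}

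\emph{Final statement.} Copositivity on $\{s\geq L\rho(w)\}$ means $aL\rho(w)+b(w)\geq0$ for all $w$. By the definition of the dual norm this is equivalent to $aL\geq\rho_*(b)=\beta$.

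The main step needing care is the exact correspondence of attainment between the scalar and the original problem, together with the fact in case~(iii) that attainment forces eventual affinity. Both follow from the monotonicity of $\psi$ and the attained norming directions.
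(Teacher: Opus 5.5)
Your proposal is correct and follows the paper's proof: the recession function $L\rho$, the sphere-wise identity $\min_{\rho(z)=r}b(z)=-r\rho_*(b)$, the slope limit $aL-\beta$ for the strict cases, and monotonicity of $\psi$ at $\beta=aL$. Your explicit transfer of attainment between the scalar and the original problem spells out a step the paper leaves implicit. Your continuity caveat in (ii) is unnecessary. Convexity gives $\limsup_{r\downarrow0}\phi(r)\leq\phi(0)$ and monotonicity gives $\phi(r)\geq\phi(0)$, so $\phi$ is continuous at $0$ under \Cref{ass:profile}. Equivalently, $\phi\circ\rho$ is finite and convex on $X$, hence continuous.
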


\begin{proof}
The recession formula follows from the recession function of $z\mapsto\phi(\rho(z))$, which is $L\rho(z)$.  For fixed $r=\rho(z)$,
\[
 \min_{\rho(z)=r}b(z)=-r\rho_*(b),
\]
which proves \eqref{eq:scalar-reduction}.  Since
\[
 \frac{a\phi(r)-\beta r}{r}\to aL-\beta,
\]
the strict cases follow.  At $\beta=aL$, the scalar objective is $a\psi(r)$.  Since $\psi$ is nonincreasing, its infimum is $\gamma$; if a finite $r_0$ satisfies $\psi(r_0)=\gamma$, monotonicity forces $\psi\equiv\gamma$ on $[r_0,\infty)$, which is exactly eventual affinity.
\end{proof}

The hyperbolic profile $\phi(r)=\sqrt{\delta^2+r^2}$, $L=1$, has
\begin{equation}\label{eq:hyperbolic-defect}
 \phi(r)-r
 =\frac{\delta^2}{\sqrt{\delta^2+r^2}+r}.
\end{equation}
This quantity tends to $0^+$ as $r\to\infty$ but is strictly positive for every finite $r$, so the limiting defect is not attained.  Hence every nonzero boundary objective has finite unattained infimum.  By contrast, $\phi(r)=\max\{\delta,r\}$ reaches its limiting defect and attains boundary optima, while $\phi(r)=r-\log(1+r)$ has defect tending to $-\infty$.  These profiles have the same recession cone, illustrating exactly what recession geometry omits.

\section{Universal scaling laws at the attainment boundary}\label{sec:universal-rates}

Assume from now on that \Cref{ass:profile} holds and that the defect has a finite unattained limit $\gamma$.  To quantify the divergence of optimizing decisions near the boundary, set
\begin{equation}\label{eq:positive-defect}
  d(r):=\phi(r)-Lr-\gamma>0,
  \qquad d(r)\downarrow0.
\end{equation}
The function $d$ is convex because it differs from $\phi$ by an affine function.

The scaling laws are governed by regular variation.  We assume that for some $\nu>0$ and $\kappa>0$,
\begin{equation}\label{eq:regular-defect}
  d(r)\sim\kappa r^{-\nu}
  \qquad(r\to\infty).
\end{equation}
No differentiability assumption is needed.  Convexity transfers the asymptotic law to the one-sided derivatives, as in the Monotone Density Theorem for regularly varying functions; see \cite[Theorem~1.7.2]{BGT1987}.  The secant argument below applies directly to a nonsmooth convex $d$ and gives both one-sided derivative asymptotics.

Recent work also gives error bounds for nonpolyhedral cones.  Facial-residual methods yield sharp, and sometimes non-H\"older, distance estimates for exponential, $p$-, and log-determinant cones \cite{LLP2025,LLLP2025}; for perspective cones, Wang, Louren\c{c}o, and Pong \cite{WLP2025} identify broad circumstances in which a one-half H\"older exponent is generic.  The exponents here come from the tail $d(r)\sim\kappa r^{-\nu}$ and describe the size of optimizing decisions and value residuals under objective tilting or regularization.  For the hyperbolic profile $\nu=1$, tilting gives the same square-root scale, but for an escaping optimal path rather than distance to feasibility.

\begin{lemma}\label{lem:convex-regular-derivative}
Suppose $d$ is convex, positive, nonincreasing, and satisfies \eqref{eq:regular-defect}.  Then
\begin{equation}\label{eq:derivative-regular}
  d'_-(r)\sim -\nu\kappa r^{-\nu-1},
  \qquad
  d'_+(r)\sim -\nu\kappa r^{-\nu-1}.
\end{equation}
\end{lemma}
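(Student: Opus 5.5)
The plan is the standard secant sandwich for convex functions, combined with the regular-variation hypothesis \eqref{eq:regular-defect} evaluated at two comparable radii. For a convex $d$ and $0<s<r<t$, the slopes of chords are nondecreasing, so
\[
  \frac{d(r)-d(s)}{r-s}\leq d'_-(r)\leq d'_+(r)\leq\frac{d(t)-d(r)}{t-r}.
\]
I will take $s=r/\lambda$ and $t=\lambda r$ with a fixed $\lambda>1$. Then I let $r\to\infty$ and afterwards $\lambda\downarrow1$. Nonincreasing monotonicity gives $d'_\pm\leq0$, which fixes the sign but is not otherwise needed.

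\emph{Step 1 (upper bound on $d'_+$).} Using $d(\lambda r)=\kappa\lambda^{-\nu}r^{-\nu}(1+o(1))$ and $d(r)=\kappa r^{-\nu}(1+o(1))$, I get
\[
  r^{\nu+1}\frac{d(\lambda r)-d(r)}{(\lambda-1)r}
  =\kappa\,\frac{\lambda^{-\nu}-1}{\lambda-1}+\frac{o(1)}{\lambda-1}.
\]
For fixed $\lambda$, the $o(1)$ term vanishes as $r\to\infty$. Hence
\[
  \limsup_{r\to\infty} r^{\nu+1}d'_+(r)\leq\kappa\frac{\lambda^{-\nu}-1}{\lambda-1}.
\]
Letting $\lambda\downarrow1$, the right side tends to $-\nu\kappa$, the derivative of $\lambda^{-\nu}$ at $1$ times $\kappa$.

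\emph{Step 2 (lower bound on $d'_-$).} Similarly, with $s=r/\lambda$,
\[
  r^{\nu+1}\frac{d(r)-d(r/\lambda)}{r(1-\lambda^{-1})}
  =\kappa\frac{1-\lambda^{\nu}}{1-\lambda^{-1}}+o(1).
\]
This gives
\[
  \liminf_{r\to\infty} r^{\nu+1}d'_-(r)\geq\kappa\frac{1-\lambda^\nu}{1-\lambda^{-1}},
\]
and the right side tends to $-\nu\kappa$ as $\lambda\downarrow1$.

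\emph{Step 3 (conclusion).} Combining the two bounds with $d'_-\leq d'_+$ squeezes both $r^{\nu+1}d'_-(r)$ and $r^{\nu+1}d'_+(r)$ to $-\nu\kappa$. Since $\nu\kappa\neq0$, this is exactly \eqref{eq:derivative-regular}.

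The only delicate point is the order of limits. The error term from \eqref{eq:regular-defect} is divided by $\lambda-1$, so $\lambda$ must be held fixed while $r\to\infty$ and only then sent to $1$. That ordering is legitimate because the bounds are one-sided $\limsup$ and $\liminf$ statements that hold for every $\lambda>1$. No differentiability is used, so the argument applies to nonsmooth convex $d$ as the statement requires.
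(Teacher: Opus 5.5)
Your proposal is correct and follows essentially the same route as the paper: both sandwich $d'_-(r)\le d'_+(r)$ between the chord slopes over $[r/\vartheta,r]$ and $[r,\vartheta r]$, let $r\to\infty$ with $\vartheta>1$ fixed using $d(r)\sim\kappa r^{-\nu}$, and then let $\vartheta\downarrow1$. Your explicit $\liminf$/$\limsup$ bookkeeping for the order of limits spells out what the paper's one-line squeeze leaves implicit.
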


\begin{proof}
Fix $\vartheta>1$.  Convexity gives
\[
  \frac{d(r)-d(r/\vartheta)}{r-r/\vartheta}
  \leq d'_-(r)\leq d'_+(r)
  \leq\frac{d(\vartheta r)-d(r)}{(\vartheta-1)r}.
\]
After multiplication by $r^{\nu+1}/\kappa$ and passage to infinity, the left and right secants converge respectively to
\[
  -\frac{\vartheta(\vartheta^\nu-1)}{\vartheta-1}
  \qquad\text{and}\qquad
  -\frac{1-\vartheta^{-\nu}}{\vartheta-1}.
\]
Letting $\vartheta\downarrow1$ squeezes both one-sided derivatives to $-\nu$ after normalization.
\end{proof}

For objective tilting, fix $a>0$ and, for $\mu>0$, consider a radial objective whose scalar reduction is
\[
  G_\mu(r):=a\phi(r)-(aL-\mu)r
  =a\gamma+a d(r)+\mu r.
\]
This corresponds to decreasing the dual-norm magnitude of the horizontal cost by the margin $\mu$.

\begin{theorem}\label{thm:universal-tilt}
Assume \eqref{eq:regular-defect}.  Let $r_\mu$ be any minimizer of $G_\mu$.  Then, as $\mu\downarrow0$,
\begin{align}
  r_\mu
  &\sim\left(\frac{a\nu\kappa}{\mu}\right)^{1/(\nu+1)},
  \label{eq:universal-tilt-radius}\\
  \min_{r\geq0}G_\mu(r)-a\gamma
  &\sim C_\nu(a\kappa)^{1/(\nu+1)}
  \mu^{\nu/(\nu+1)},
  \label{eq:universal-tilt-value}\\
  a d(r_\mu)
  &\sim \nu^{-\nu/(\nu+1)}(a\kappa)^{1/(\nu+1)}
  \mu^{\nu/(\nu+1)},
  \label{eq:universal-tilt-residual}
\end{align}
where $C_\nu:=(\nu+1)\nu^{-\nu/(\nu+1)}$.  Every family of minimizers has the same asymptotic radius.
\end{theorem}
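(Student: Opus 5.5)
The plan is to analyse the scalar problem $G_\mu(r)=a\gamma+a d(r)+\mu r$ directly. Existence and compactness of minimizers come from coercivity: $d\geq0$ and $\mu r\to\infty$. I would locate the minimizers through the first-order condition for convex functions and then use \Cref{lem:convex-regular-derivative}.

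\emph{Step 1: escape and optimality condition.} Any minimizer $r_\mu$ satisfies $0\in a\,\partial d(r_\mu)+\mu$ when $r_\mu>0$, i.e.
\[
  a\,d'_-(r_\mu)\leq-\mu\leq a\,d'_+(r_\mu).
\]
At $r_\mu=0$ only the right inequality $-\mu\leq a d'_+(0)$ is needed. I first show $r_\mu\to\infty$. Fix $R$. Since $d$ is convex and nonincreasing with $d(r)\downarrow0$ never attained, $d$ is strictly decreasing, so $d'_+(R)<0$. For $\mu<a|d'_+(R)|$ the subgradient inclusion fails at every $r\leq R$, by monotonicity of the one-sided derivatives. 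Hence $r_\mu>R$ eventually.

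\emph{Step 2: radius law.} By \Cref{lem:convex-regular-derivative}, both $d'_\pm(r_\mu)=-(1+o(1))\nu\kappa r_\mu^{-\nu-1}$. The sandwich therefore gives
\[
  \mu=(1+o(1))\,a\nu\kappa\,r_\mu^{-\nu-1},
\]
which is \eqref{eq:universal-tilt-radius}. This holds for any choice of minimizer, so every family has the same asymptotic radius.

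\emph{Step 3: residual and value.} Using \eqref{eq:regular-defect} at $r_\mu\to\infty$,
\[
  a d(r_\mu)\sim a\kappa r_\mu^{-\nu}=a\kappa\Bigl(\frac{\mu}{a\nu\kappa}\Bigr)^{\nu/(\nu+1)},
\]
which simplifies to \eqref{eq:universal-tilt-residual}. Also $\mu r_\mu\sim(a\nu\kappa)^{1/(\nu+1)}\mu^{\nu/(\nu+1)}$. Adding the two terms gives the coefficient
\[
  (a\kappa)^{1/(\nu+1)}\bigl(\nu^{1/(\nu+1)}+\nu^{-\nu/(\nu+1)}\bigr)=C_\nu(a\kappa)^{1/(\nu+1)},
\]
which is \eqref{eq:universal-tilt-value}. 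As a cross-check, one could instead derive the value law by a $\Gamma$-type argument as in Steps~2 and~5 of \Cref{thm:facewise-selection}. The upper bound uses the trial point $r=s\mu^{-1/(\nu+1)}$; the lower bound follows once $r_\mu\to\infty$ and $r_\mu$ is of the natural order.

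\emph{Main obstacle.} The delicate point is Step~1, specifically ensuring that minimizers go to infinity so that the asymptotic derivative law applies at $r_\mu$. Strict negativity of $d'_+$ everywhere is the key input. It follows from the unattained limit: if $d'_+(R)=0$ for some $R$, convexity would make $d$ constant on $[R,\infty)$, and hence equal to $0$ there, contradicting $d>0$.
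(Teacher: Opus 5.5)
Your proposal is correct and follows essentially the paper's proof: you use the optimality condition $-\mu/a\in\partial d(r_\mu)$ with \Cref{lem:convex-regular-derivative} to get the radius law, then regular variation plus $\mu r_\mu\sim a\nu\kappa r_\mu^{-\nu}$ for the residual and value. The only variation is in proving $r_\mu\to\infty$. You use strict negativity of $d'_+$ and monotonicity of the one-sided derivatives, which gives an explicit threshold $\mu<a|d'_+(R)|$. The paper instead argues that a bounded family of minimizers would converge to a finite minimizer of $d$. Both arguments are valid.
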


\begin{proof}
The function $G_\mu$ is convex and coercive, so minimizers exist.  Any bounded family of minimizers would, after passage to the limit $\mu\downarrow0$, minimize $d$ at a finite point, contrary to \eqref{eq:positive-defect}.  Hence $r_\mu\to\infty$.  The optimality condition is
\[
  -\frac{\mu}{a}\in\partial d(r_\mu)
  =[d'_-(r_\mu),d'_+(r_\mu)].
\]
By \Cref{lem:convex-regular-derivative},
\[
  \mu\sim a\nu\kappa r_\mu^{-\nu-1},
\]
which proves \eqref{eq:universal-tilt-radius}.  Since $d(r_\mu)\sim\kappa r_\mu^{-\nu}$ and
\[
  \mu r_\mu\sim a\nu\kappa r_\mu^{-\nu},
\]
the excess value is asymptotic to $a(\nu+1)\kappa r_\mu^{-\nu}$.  Substitution gives \eqref{eq:universal-tilt-value}, and the same calculation without the linear tilt gives \eqref{eq:universal-tilt-residual}.
\end{proof}

A hard decision budget gives a second approximation mechanism.  At a nonzero boundary objective, impose $\rho(z)\leq R$ and denote the restricted value by $v_R$.

\begin{theorem}\label{thm:universal-truncation}
Let $a>0$ and $\rho_*(b)=aL$.  Then for every $R\geq0$,
\begin{equation}\label{eq:truncated-exact}
  v_R=a\gamma+a d(R).
\end{equation}
If \eqref{eq:regular-defect} holds, then
\[
  v_R-a\gamma\sim a\kappa R^{-\nu}.
\]
Consequently, defining $R_\eta:=\inf\{R\geq0:v_R-a\gamma\leq\eta\}$, the smallest admissible budget satisfies
\[
  R_\eta\sim\left(\frac{a\kappa}{\eta}\right)^{1/\nu}
  \qquad(\eta\downarrow0).
\]
\end{theorem}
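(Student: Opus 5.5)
The plan is to reduce to the scalar profile exactly as in the proof of \Cref{thm:boundary-trichotomy}, with the constraint $\rho(z)\leq R$ carried along. Since $a>0$, we first eliminate $t$ by setting $t=\phi(\rho(z))$. Then, for each fixed radius $r=\rho(z)\in[0,R]$, we minimize $b(z)$ over the sphere $\rho(z)=r$. This gives $-r\rho_*(b)=-aLr$, attained at $z=-ru$ with $u\in\mathcal D_\rho(b)$. The result is
\[
  v_R=\min_{0\leq r\leq R}\{a\phi(r)-aLr\}=\min_{0\leq r\leq R}\{a\gamma+a d(r)\}.
\]
The minimum exists because the feasible set is compact and the objective is continuous.

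Next, \eqref{eq:positive-defect} says $d$ is nonincreasing; this holds because $\psi$ is nonincreasing by \Cref{ass:profile}. So the minimum over $[0,R]$ is attained at $r=R$, which gives \eqref{eq:truncated-exact}. The case $R=0$ is consistent with this, since the only feasible point is then $z=0$. The asymptotic law $v_R-a\gamma\sim a\kappa R^{-\nu}$ is then immediate from \eqref{eq:regular-defect}.

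For the budget, write $R_\eta=\inf\{R:d(R)\leq\eta/a\}$. Because $d$ is continuous and strictly positive with $d(r)\downarrow0$, the set $\{R:d(R)\leq\eta/a\}$ is a nonempty closed half-line $[R_\eta,\infty)$. Hence $d(R_\eta)=\eta/a$ whenever $\eta/a<d(0)$, which holds for small $\eta$. Moreover $R_\eta\to\infty$ as $\eta\downarrow0$, since $d>0$ everywhere.

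Applying \eqref{eq:regular-defect} at $R_\eta$ gives $\kappa R_\eta^{-\nu}\sim\eta/a$. Inverting this power-law relation yields $R_\eta\sim(a\kappa/\eta)^{1/\nu}$.

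The only mildly delicate point is the inversion step when $d$ is not strictly decreasing. I plan to avoid needing strict monotonicity altogether by using the identity $d(R_\eta)=\eta/a$, which follows from continuity alone. The asymptotic equivalence then transfers directly, without any need to invert $d$.
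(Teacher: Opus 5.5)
Your proposal is correct and follows the paper's proof: reduce to the scalar function $a\gamma+a d(r)$ on $[0,R]$, use monotonicity of $d$ to place the minimum at $r=R$, then apply regular variation and invert $d$. Your continuity argument giving $d(R_\eta)=\eta/a$ just spells out the paper's one-line ``inversion of the monotone function $d$.'' The concern about non-strict monotonicity is moot anyway: a convex, positive $d$ with $d(r)\to0$ cannot have a flat segment, so it is strictly decreasing.
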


\begin{proof}
The scalar boundary objective is $a\gamma+a d(r)$, which is nonincreasing.  Its minimum over $0\leq r\leq R$ is therefore attained at $R$, proving \eqref{eq:truncated-exact}.  The remaining statements follow by regular variation and inversion of the monotone function $d$.
\end{proof}

For power regularization, let $q>1$ and $\lambda>0$ and consider
\[
  \inf_{(t,z)\in C_{\phi,\rho}}
  \left\{at+b(z)+\frac{\lambda}{q}\rho(z)^q\right\},
  \qquad \rho_*(b)=aL.
\]
Its scalar reduction is
\begin{equation}\label{eq:power-scalar}
  J_\lambda(r)
  =a\gamma+a d(r)+\frac{\lambda}{q}r^q.
\end{equation}

\begin{theorem}\label{thm:universal-regularization}
Assume \eqref{eq:regular-defect}, and let $r_\lambda$ be any minimizing radius in \eqref{eq:power-scalar}.  Then
\begin{align}
  r_\lambda
  &\sim\left(\frac{a\nu\kappa}{\lambda}\right)^{1/(\nu+q)},
  \label{eq:power-radius}\\
  a d(r_\lambda)
  &\sim a\kappa
  \left(\frac{\lambda}{a\nu\kappa}\right)^{\nu/(\nu+q)},
  \label{eq:power-residual}\\
  \min J_\lambda-a\gamma
  &\sim\left(1+\frac{\nu}{q}\right)a\kappa
  \left(\frac{\lambda}{a\nu\kappa}\right)^{\nu/(\nu+q)}.
  \label{eq:power-value}
\end{align}
In particular, the regularization path escapes at rate $\lambda^{-1/(\nu+q)}$.
\end{theorem}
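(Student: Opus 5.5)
The argument follows the template of \Cref{thm:universal-tilt}, with the linear tilt $\mu r$ replaced by the power term $\frac{\lambda}{q}r^q$.

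\emph{Existence and escape.} The function $J_\lambda$ is convex, since $d$ is convex and $r\mapsto r^q$ is convex on $[0,\infty)$. It is continuous on $[0,\infty)$ and coercive, because $d$ is bounded below by $0$ while $\frac{\lambda}{q}r^q\to\infty$. Hence a minimizer $r_\lambda$ exists. Comparing with any fixed radius $R$ gives
\[
a d(r_\lambda)\le \min J_\lambda-a\gamma\le a d(R)+\tfrac{\lambda}{q}R^q .
\]
Letting $\lambda\downarrow0$ and then $R\to\infty$ shows $d(r_\lambda)\to0$. Since $d>0$ everywhere and $d$ is continuous, $r_\lambda$ cannot stay bounded along any sequence, so $r_\lambda\to\infty$.

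\emph{First-order asymptotics.} Since $r_\lambda>0$, the power term is differentiable there, and the optimality condition reads
\[
-\lambda r_\lambda^{q-1}\in\partial d(r_\lambda)=[d'_-(r_\lambda),d'_+(r_\lambda)].
\]
By \Cref{lem:convex-regular-derivative}, both endpoints are asymptotic to $-\nu\kappa r_\lambda^{-\nu-1}$, and $r_\lambda\to\infty$, so the squeeze gives
\[
\lambda r_\lambda^{q-1}\sim a\nu\kappa\, r_\lambda^{-\nu-1}.
\]
Here the factor $a$ must be tracked: the scalar function is $a d$, so the condition is really $-\lambda r^{q-1}/a\in\partial d(r)$. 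Thus $r_\lambda^{\nu+q}\sim a\nu\kappa/\lambda$, which is \eqref{eq:power-radius}.

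\emph{Residual and value.} Substituting into $d(r_\lambda)\sim\kappa r_\lambda^{-\nu}$ gives
\[
a d(r_\lambda)\sim a\kappa\bigl(\lambda/(a\nu\kappa)\bigr)^{\nu/(\nu+q)},
\]
which is \eqref{eq:power-residual}; passing to the power $-\nu/(\nu+q)$ preserves asymptotic equivalence. For the value, the regularization term equals
\[
\tfrac{\lambda}{q}r_\lambda^q=\tfrac1q\bigl(\lambda r_\lambda^{q+\nu}\bigr)r_\lambda^{-\nu}\sim\tfrac{a\nu\kappa}{q}r_\lambda^{-\nu}\sim\tfrac{\nu}{q}\,a d(r_\lambda).
\]
Adding this to $a d(r_\lambda)$ gives $\min J_\lambda-a\gamma\sim(1+\nu/q)\,a d(r_\lambda)$, which is \eqref{eq:power-value}.

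The only delicate step is the derivative transfer. It is already handled by \Cref{lem:convex-regular-derivative}, which needs only $r_\lambda\to\infty$, so escape must be established before the lemma is applied. No uniqueness of the minimizer is needed: the argument applies to every minimizing selection.
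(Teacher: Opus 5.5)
Your proof is correct and follows the paper's own argument. The steps match: coercivity gives existence, the unattained boundary infimum forces $r_\lambda\to\infty$, the inclusion $-\frac{\lambda}{a}r_\lambda^{q-1}\in\partial d(r_\lambda)$ is combined with \Cref{lem:convex-regular-derivative}, and the residual and penalty terms are added at the end. Your comparison bound $a d(r_\lambda)\le a d(R)+\frac{\lambda}{q}R^q$ usefully spells out the escape step that the paper only states; you should write the optimality inclusion with the factor $1/a$ from the start instead of correcting it afterward.
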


\begin{proof}
Existence follows from coercivity.  As before, minimizing radii diverge because the unregularized boundary infimum is unattained.  The scalar optimality condition is
\[
  -\frac{\lambda}{a}r_\lambda^{q-1}
  \in\partial d(r_\lambda).
\]
Using \eqref{eq:derivative-regular} yields
\[
  \lambda r_\lambda^{q-1}
  \sim a\nu\kappa r_\lambda^{-\nu-1},
\]
which proves \eqref{eq:power-radius}.  Regular variation gives \eqref{eq:power-residual}.  The stationarity asymptotic also gives
\[
  \frac{\lambda}{q}r_\lambda^q
  \sim\frac{a\nu\kappa}{q}r_\lambda^{-\nu}.
\]
Adding the residual and penalty terms proves \eqref{eq:power-value}.
\end{proof}

The three mechanisms produce different exponents even though they approximate the same boundary value.

\begin{table}[t]
\centering
\caption{Universal scales for a defect $d(r)\sim\kappa r^{-\nu}$.}
\label{tab:universal-scales}
\begin{tabular}{@{}lll@{}}
\toprule
Approximation mechanism & selected radius & boundary residual or value excess\\
\midrule
Objective margin $\mu$ & $\mu^{-1/(\nu+1)}$ & $\mu^{\nu/(\nu+1)}$\\
Hard error tolerance $\eta$ & $\eta^{-1/\nu}$ & $\eta$\\
Power penalty $\lambda r^q/q$ & $\lambda^{-1/(\nu+q)}$ & $\lambda^{\nu/(\nu+q)}$\\
\bottomrule
\end{tabular}
\end{table}

\begin{remark}\label{rem:hyperbolic-universality}
For the hyperbolic profile,
\[
  \sqrt{\delta^2+r^2}-r
  \sim\frac{\delta^2}{2r},
\]
so $\nu=1$ and $\kappa=\delta^2/2$.  This is the same exponent $\nu=1$ that appears in the facewise expansion of \Cref{cor:ellipsoid-facewise}; thus the nonradial selection theorem and the scalar radial laws use the same first correction for the ellipsoidal example.  Objective tilting therefore gives the inverse square-root radius law, a hard residual tolerance gives the inverse-radius law, and quadratic regularization gives the cube-root law.  The exact SOCP formulas below refine these general asymptotics and include the influence of the objective and ellipsoidal metric.
\end{remark}

\section{Ellipsoidal second-order cone programs}\label{sec:socp-models}

For shifted Lorentz-cone geometry, the radial theory gives closed primal and dual formulas, including the boundary case in which strict feasibility and dual attainment coexist with primal nonattainment.

Let $H\in\R^{n\times n}$ be symmetric positive definite, let $H^{1/2}$ denote its symmetric positive-definite square root, and let $\delta>0$.  Define
\[
  C_{\delta,H}:=
  \left\{(t,z)\in\R\times\R^n:
  t\geq\sqrt{\delta^2+z^{\trans}Hz}\right\}.
\]
This is the hyperbolic norm epigraph associated with $\rho_H(z):=\sqrt{z^{\trans}Hz}$, whose dual norm is $\rho_{H,*}(b)=\sqrt{b^{\trans}H^{-1}b}$.
Define
\begin{equation}\label{eq:betaH}
  \beta_H(b):=\sqrt{b^{\trans}H^{-1}b},
  \qquad \beta:=\beta_H(b).
\end{equation}
The optimization problem is
\begin{equation}\label{prob:ellipsoid}
  (P_{a,b}^{\delta,H})
  \qquad
  \inf_{(t,z)\in C_{\delta,H}}\bigl(at+b^{\trans}z\bigr).
\end{equation}

The general radial theory now yields a complete phase diagram for the ellipsoidal SOCP.

\begin{theorem}\label{thm:ellipsoidal-socp}
The recession cone is
\begin{equation}\label{eq:recH}
  C_{\delta,H}^\infty
  =\{(s,w):s\geq\sqrt{w^{\trans}Hw}\}.
\end{equation}
The objective is copositive on \eqref{eq:recH} if and only if $a\geq\beta$.

\begin{enumerate}[label=\textup{(\roman*)}]
\item If $a<\beta$, then $\val(P_{a,b}^{\delta,H})=-\infty$.
\item If $a>\beta$ and $\Delta=\sqrt{a^2-\beta^2}$, then the unique minimizer is
\begin{equation}\label{eq:ellipsoid-minimizer}
  t^*=\frac{\delta a}{\Delta},
  \qquad
  z^*=-\frac{\delta}{\Delta}H^{-1}b,
\end{equation}
and $\val(P_{a,b}^{\delta,H})=\delta\Delta$.
\item If $a=\beta>0$, then the value is zero and is not attained.
\item If $(a,b)=(0,0)$, every feasible point is optimal.
\end{enumerate}
\end{theorem}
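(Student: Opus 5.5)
The plan is to specialize \Cref{thm:boundary-trichotomy} to the hyperbolic profile $\phi(r)=\sqrt{\delta^2+r^2}$ with norm $\rho_H(z)=\sqrt{z^{\trans}Hz}$. This profile satisfies \Cref{ass:profile} with $L=1$. Since $\rho_{H,*}(b)=\beta_H(b)$, which one checks by Cauchy--Schwarz in the $H$-metric after the substitution $y=H^{1/2}z$, the recession formula gives \eqref{eq:recH} directly. Copositivity is then equivalent to $a\geq\beta$ when $a>0$. I would treat $a\le 0$ separately via the first sentence of \Cref{thm:conjugate-classification}: for $a<0$ the objective is unbounded and not copositive, since $(1,0)$ is a recession direction; for $a=0$ copositivity forces $b=0$. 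This matches ``$a\geq\beta$'' because $\beta\geq0$, and it settles item (iv) as well as item (i) in the case $a\le 0$.

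For $a>0$, the scalar reduction \eqref{eq:scalar-reduction} gives the value $\inf_{r\ge0}\{a\sqrt{\delta^2+r^2}-\beta r\}$. Item (i) with $a>0$ is case (i) of \Cref{thm:boundary-trichotomy}. Item (iii) is case (iii): here $\gamma=0$, and \eqref{eq:hyperbolic-defect} shows the defect is strictly positive, so the limit is never reached. The value is $a\gamma=0$ and it is not attained.

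The main computation is item (ii). I would solve the scalar problem explicitly. The function is strictly convex and coercive when $a>\beta$, and stationarity $ar/\sqrt{\delta^2+r^2}=\beta$ gives $r^*=\delta\beta/\Delta$. Then $\sqrt{\delta^2+r^{*2}}=\delta a/\Delta$, and the value is $(\delta a^2-\delta\beta^2)/\Delta=\delta\Delta$.

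Uniqueness and the explicit minimizer need care, and I expect this to be the only delicate step. Any minimizer must satisfy $t=\sqrt{\delta^2+z^{\trans}Hz}$, since $a>0$, and $\rho_H(z)=r^*$, by strict convexity of the scalar problem. It must also minimize $b^{\trans}z$ on the sphere $\rho_H(z)=r^*$. With $y=H^{1/2}z$, the quantity $b^{\trans}z=(H^{-1/2}b)^{\trans}y$ is minimized uniquely at $y=-r^*H^{-1/2}b/\beta$, using the equality case of Cauchy--Schwarz, when $b\neq0$. This yields $z^*=-(\delta/\Delta)H^{-1}b$ and $t^*=\delta a/\Delta$. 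When $b=0$ we have $r^*=0$ and the formula still holds trivially. As an alternative check, one can verify that $(a,b)$ is a positive multiple of the normal to $\epi\phi\circ\rho_H$ at $(t^*,z^*)$ via the gradient $Hz/\sqrt{\delta^2+z^{\trans}Hz}$.
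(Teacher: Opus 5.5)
Your proposal is correct and follows essentially the same route as the paper. Like the paper, you specialize \Cref{thm:boundary-trichotomy} with $\phi(r)=\sqrt{\delta^2+r^2}$, $L=1$ and $\rho_{H,*}=\beta_H$, handle $a\le 0$ directly, and for $a>\beta$ solve the strictly convex scalar problem, recovering the unique $z^*$ from the equality case of Cauchy--Schwarz. Your explicit check of the copositivity equivalence when $a\le 0$, and of the fact that any minimizer satisfies $t=\sqrt{\delta^2+z^{\trans}Hz}$, fills in small details the paper leaves implicit.
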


\begin{proof}
If $a<0$, the objective is unbounded below by sending $t\to+\infty$.  If $a=0$ and $b\neq0$, it is unbounded below by moving $z$ along a direction on which $b^{\trans}z\to-\infty$; the case $(a,b)=(0,0)$ is immediate.  We may therefore assume $a>0$.

Take $\rho=\rho_H$ and $\phi(r)=\sqrt{\delta^2+r^2}$ in \Cref{thm:boundary-trichotomy}.  Since $L=1$ and $\rho_{H,*}(b)=\beta$, the recession cone, copositivity criterion, and the cases $0<a<\beta$ and $a=\beta$ follow immediately.  Suppose $a>\beta$.  For fixed $r=\rho_H(z)$, the least possible value of $b^{\trans}z$ is $-\beta r$; when $b\neq0$ it is attained uniquely at
\[
  z=-\frac{r}{\beta}H^{-1}b.
\]
Thus it remains to minimize
\[
  \chi(r)=a\sqrt{\delta^2+r^2}-\beta r,\qquad r\geq0.
\]
If $b=0$, then $r=0$ is the unique minimizer.  If $b\neq0$, strict convexity and $\chi'(r)=0$ give
\[
  \frac{ar}{\sqrt{\delta^2+r^2}}=\beta,
  \qquad
  r=\frac{\delta\beta}{\sqrt{a^2-\beta^2}}.
\]
Substituting this radius yields \eqref{eq:ellipsoid-minimizer} and
$\val(P_{a,b}^{\delta,H})=\delta\sqrt{a^2-\beta^2}$.
\end{proof}

The SOCP representation is
\[
  (t,H^{1/2}z,\delta)\in\Qcone^{n+2},
  \qquad
  \Qcone^{n+2}:=
  \{(y_0,\bar y):y_0\geq\norm{\bar y}\}.
\]
In particular, $(2\delta,0)$ is strictly feasible.  The dual can be written explicitly.

Write
\[
  A(t,z):=(t,H^{1/2}z,0),
  \qquad
  \omega_\delta:=(0,0,-\delta),
\]
so that the constraint is $A(t,z)-\omega_\delta\in\Qcone^{n+2}$.  Since the Lorentz cone is self-dual, the Lagrange dual is
\[
\begin{aligned}
  (D_{a,b}^{\delta,H})\qquad
  \sup_{y\in\Qcone^{n+2}}\quad &\ip{\omega_\delta}{y}\\
  \text{subject to}\quad&A^{\trans}y=(a,b).
\end{aligned}
\]
Writing $y=(y_0,\bar y,\upsilon)$, the equality constraint gives $y_0=a$ and $\bar y=H^{-1/2}b$.
Therefore the dual reduces to
\begin{equation}\label{eq:dual-scalar}
  \sup\left\{-\delta\upsilon:
  a\geq\sqrt{\beta^2+\upsilon^2}\right\}.
\end{equation}

At the boundary, the conic dual is attained and the duality gap is zero even though the primal optimum is not attained.

\begin{proposition}\label{prop:ellipsoid-duality}
The primal problem \eqref{prob:ellipsoid} satisfies Slater's condition for every objective.  If $a\geq\beta$, the dual has the unique solution $y^*=\left(a,H^{-1/2}b,-\sqrt{a^2-\beta^2}\right)$ and
\[
  \val(D_{a,b}^{\delta,H})=\delta\sqrt{a^2-\beta^2}=\val(P_{a,b}^{\delta,H}).
\]
At the boundary $a=\beta>0$, the dual optimum is attained at $y^*=(\beta,H^{-1/2}b,0)$, while the primal optimum does not exist.
\end{proposition}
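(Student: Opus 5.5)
The plan is to argue directly from the SOCP data, using the reduced dual \eqref{eq:dual-scalar} and the primal phase diagram of \Cref{thm:ellipsoidal-socp}.

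\emph{Slater's condition.} For any objective, the point $(t,z)=(2\delta,0)$ gives $A(t,z)-\omega_\delta=(2\delta,0,\delta)$. Its first coordinate $2\delta$ strictly exceeds the Euclidean norm $\delta$ of the remaining block, so the point lies in $\interior\Qcone^{n+2}$. The feasible set does not depend on $(a,b)$, so this holds for every objective.

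\emph{Dual solution and value.} Take $a\ge\beta$. By the computation preceding the proposition, the equality constraint $A^{\trans}y=(a,b)$ forces $y_0=a$ and $\bar y=H^{-1/2}b$. Hence $\norm{\bar y}^2=b^{\trans}H^{-1}b=\beta^2$. Cone membership then reduces to $\upsilon^2\le a^2-\beta^2$, and the dual objective is $-\delta\upsilon$. Since $\delta>0$, this linear function of one variable on the interval $[-\sqrt{a^2-\beta^2},\sqrt{a^2-\beta^2}]$ is maximized uniquely at $\upsilon=-\sqrt{a^2-\beta^2}$, with value $\delta\sqrt{a^2-\beta^2}$. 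Because the first two blocks of $y$ are forced, this $y^*$ is the unique dual solution. When $a=\beta$ the interval collapses to $\{0\}$, which gives $y^*=(\beta,H^{-1/2}b,0)$ and value $0$. When $a<\beta$ the dual is infeasible; this case is excluded by hypothesis.

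\emph{Zero gap and primal nonattainment.} Compare with \Cref{thm:ellipsoidal-socp}.
\begin{itemize}
\item For $a>\beta$, part (ii) gives primal value $\delta\Delta$ with $\Delta=\sqrt{a^2-\beta^2}$, which equals the dual value.
\item For $a=\beta>0$, part (iii) gives primal value $0$, not attained, which matches the dual value $0$ attained at $y^*$.
\item For $a=\beta=0$ we have $b=0$, and part (iv) gives value $0$ attained everywhere, which is consistent with the formula.
\end{itemize}
One could instead invoke conic strong duality under Slater's condition, but the explicit comparison already yields the equality and needs no further machinery.

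The main point requiring care is bookkeeping rather than mathematics. First, the signs in the reduced dual must be checked: the Lagrangian term $\ip{\omega_\delta}{y}=-\delta\upsilon$ comes from the constraint $A(t,z)-\omega_\delta\in\Qcone^{n+2}$. Second, the degenerate case $b=0$ must be covered: there $H^{-1/2}b=0$, and the argument above still goes through unchanged.
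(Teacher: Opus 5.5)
Your proposal is correct and follows essentially the same route as the paper: Slater's condition via the interior point $(2\delta,0,\delta)$, the reduced scalar dual \eqref{eq:dual-scalar} maximized uniquely at $\upsilon^*=-\sqrt{a^2-\beta^2}$, and a direct comparison with \Cref{thm:ellipsoidal-socp} for the zero gap and the primal nonattainment. Your explicit handling of the degenerate case $a=\beta=0$ and of the sign bookkeeping fills in details that the paper leaves implicit.
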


\begin{proof}
Strict feasibility follows from $(2\delta,0,\delta)\in\interior\Qcone^{n+2}$.  Formula \eqref{eq:dual-scalar} gives the unique maximizing value
\[
  \upsilon^*=-\sqrt{a^2-\beta^2}
\]
whenever $a\geq\beta$.  The resulting dual value agrees with \Cref{thm:ellipsoidal-socp}.  In the boundary case, the square root is zero and the primal value is finite but unattained.
\end{proof}

Slater's condition does not prevent this form of primal nonattainment.

\begin{remark}\label{rem:slater}
The boundary example satisfies the standard primal constraint qualification.  Slater's condition yields equality of the primal and dual values and dual attainment under finiteness, but it does not force primal attainment.  At $a=\beta>0$, \Cref{prop:ellipsoid-image} shows that the objective image is $(0,\infty)$, so the failure of attainment is exactly the failure of closedness at its lower endpoint; compare \cite{P2007}.
\end{remark}

The one-dimensional objective image gives an equivalent description.

\begin{proposition}\label{prop:ellipsoid-image}
Assume $a\geq0$.  Then
\[
  \{at+b^{\trans}z:(t,z)\in C_{\delta,H}\}
  =
  \begin{cases}
  \R,&0\leq a<\beta,\\
  (0,\infty),&a=\beta>0,\\
  [\delta\sqrt{a^2-\beta^2},\infty),&a>\beta,\\
  \{0\},&(a,b)=(0,0).
  \end{cases}
\]
Thus the image is nonclosed precisely at the nonzero copositive boundary.
\end{proposition}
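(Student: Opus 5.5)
We reduce everything to the scalar radial picture.  The case $(a,b)=(0,0)$ is immediate, since the objective vanishes identically on the nonempty set $C_{\delta,H}$.  If $a=0$ and $b\neq0$, the $t$-coordinate is irrelevant, and the image is $\{b^{\trans}z\}$ over all $z\in\R^n$.  This is all of $\R$, because for every $z$ the point $t=\sqrt{\delta^2+z^{\trans}Hz}$ makes $(t,z)$ feasible.  This covers the subcase $a=0<\beta$.  Henceforth $a>0$.

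For $a>0$ we use the last paragraph of the proof of \Cref{thm:conjugate-classification}, i.e.\ part (v).  Increasing $t$ from any feasible point moves the objective continuously up to $+\infty$.  Hence the image is an upper half-line $[m,\infty)$ or $(m,\infty)$, or all of $\R$ when $m=-\infty$, where $m$ is the infimum.  The endpoint is included exactly when the infimum is attained.

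It remains to read off $m$ and attainment from \Cref{thm:ellipsoidal-socp}.
\begin{itemize}
\item[] If $0<a<\beta$, then $m=-\infty$ by part (i), and the image is $\R$.
\item[] If $a>\beta$, part (ii) gives an attained value $\delta\sqrt{a^2-\beta^2}$, so the image is $[\delta\sqrt{a^2-\beta^2},\infty)$.
\item[] If $a=\beta>0$, part (iii) gives value $0$, not attained, so the image is $(0,\infty)$.
\end{itemize}
The cases are exhaustive for $a\geq0$.  When $a=0$ we have either $b=0$, or $b\neq0$, which means $\beta>0=a$.

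For the final sentence, the images $\R$, $[\cdot,\infty)$ and $\{0\}$ are closed, and $(0,\infty)$ is not.  The nonzero copositive boundary is exactly the case $a=\beta>0$, since copositivity means $a\geq\beta$ by \Cref{thm:ellipsoidal-socp}.

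There is no real obstacle.  The only point needing care is the continuity/upward-closure argument that turns ``infimum and attainment'' into the exact image.  Here one must check that every value above $m$ is actually realized, using the $t$-increase argument, including in the unbounded case, where it yields all of $\R$.
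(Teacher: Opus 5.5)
Your proof is correct and follows essentially the same route as the paper. For $a>0$ both arguments show the image is an upward-closed half-line whose endpoint and attainment come from \Cref{thm:ellipsoidal-socp}; the paper gets the interval from convexity plus unboundedness in $t$, while you use the $t$-increase argument of \Cref{thm:conjugate-classification}(v), a cosmetic difference. The $a=0$ cases are handled identically, via surjectivity of $z\mapsto b^{\trans}z$ and the trivial zero objective.
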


\begin{proof}
Assume first that $a>0$.  Because $C_{\delta,H}$ is convex, its image under the linear objective is an interval; because $t$ may be increased arbitrarily while preserving feasibility, that interval is unbounded above.  \Cref{thm:ellipsoidal-socp} identifies its infimum and determines whether the left endpoint is attained.  This gives the first three cases.  If $a=0$ and $b\neq0$, the map $z\mapsto b^{\trans}z$ is onto $\R$, while $t$ can always be chosen feasible.  The zero objective is immediate.
\end{proof}

\section{Quantitative behavior near the nonattained boundary}\label{sec:quantitative}

The phase diagram locates the nonattainment boundary; the explicit formulas also describe the approach to it.  Constraint-qualification-free optimality conditions for linear second-order cone programs provide a related perspective on boundary behavior \cite{KT2024}.  For the ellipsoidal model, they give asymptotic rates for the optimizer and the value under inward objective perturbations and quadratic regularization.  Together with the hard-truncation law in \Cref{thm:universal-truncation}, this yields three approximation scales.

\subsection{Sensitivity and ill-conditioning}\label{sec:sensitivity}

Fix $b\neq0$ and let $\beta$ be as in \eqref{eq:betaH}.  We approach the boundary from the strictly copositive side by setting
\[
  a_\tau:=\beta+\tau,
  \qquad \tau>0,
\]
and
\[
  \Delta_\tau:=\sqrt{a_\tau^2-\beta^2}
  =\sqrt{2\beta\tau+\tau^2}.
\]
Let $(t_\tau,z_\tau)$ be the unique minimizer of $P_{a_\tau,b}^{\delta,H}$.  Theorem~\ref{thm:sensitivity} gives the exact blow-up and value laws as the copositivity margin vanishes.

\begin{theorem}\label{thm:sensitivity}
As $\tau\downarrow0$,
\begin{align*}
  \val(P_{a_\tau,b}^{\delta,H})
  &=\delta\Delta_\tau
   =\delta\sqrt{2\beta\tau}\left(1+O(\tau/\beta)\right),\\
  \rho_H(z_\tau)
  &=\frac{\delta\beta}{\Delta_\tau}
   =\delta\sqrt{\frac{\beta}{2\tau}}
     \left(1+O(\tau/\beta)\right),\\
  t_\tau
  &=\frac{\delta(\beta+\tau)}{\Delta_\tau}
   =\delta\sqrt{\frac{\beta}{2\tau}}
     \left(1+O(\tau/\beta)\right).
\end{align*}
Moreover, if $\ell_0(t,z):=\beta t+b^{\trans}z$ is the boundary objective, then
\[
  \ell_0(t_\tau,z_\tau)
  =\frac{\beta\delta\tau}{\Delta_\tau}
  =\delta\sqrt{\frac{\beta\tau}{2}}
   \left(1+O(\tau/\beta)\right).
\]
Thus a small objective tilt of size $\tau$ produces an $O(\sqrt\tau)$ boundary residual but an $O(\tau^{-1/2})$ decision magnitude.  Here $\beta>0$ is fixed; consequently $O(\tau/\beta)$ and $O(\tau)$ are equivalent as $\tau\downarrow0$, but the dimensionless form $O(\tau/\beta)$ records the natural relative margin.
\end{theorem}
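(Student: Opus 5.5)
The plan is to read everything off the closed-form minimizer in \Cref{thm:ellipsoidal-socp}(ii) and then expand in $\tau$; no further variational argument should be needed. Since $b\neq0$ and $a_\tau=\beta+\tau>\beta$, part (ii) gives the unique minimizer $t_\tau=\delta a_\tau/\Delta_\tau$ and $z_\tau=-(\delta/\Delta_\tau)H^{-1}b$, together with the value $\delta\Delta_\tau$. The first identity is therefore immediate. For the second, I would compute $\rho_H(z_\tau)^2=(\delta/\Delta_\tau)^2\,b^{\trans}H^{-1}HH^{-1}b=(\delta/\Delta_\tau)^2\beta^2$, so $\rho_H(z_\tau)=\delta\beta/\Delta_\tau$. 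The third identity is just the formula for $t_\tau$ with $a_\tau=\beta+\tau$ substituted.

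For the boundary residual, I would evaluate
\[
  \ell_0(t_\tau,z_\tau)=\frac{\delta}{\Delta_\tau}\bigl(\beta(\beta+\tau)-b^{\trans}H^{-1}b\bigr)=\frac{\delta}{\Delta_\tau}\bigl(\beta^2+\beta\tau-\beta^2\bigr)=\frac{\beta\delta\tau}{\Delta_\tau}.
\]
As a cross-check, this equals $\delta\Delta_\tau-\tau t_\tau$, which is consistent with $\ell_0=\ell_{a_\tau,b}-\tau t$.

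The asymptotic forms all come from one factorization,
\[
  \Delta_\tau=\sqrt{2\beta\tau}\,\sqrt{1+\tau/(2\beta)}.
\]
Put $\epsilon:=\tau/(2\beta)$. Then $\sqrt{1+\epsilon}=1+O(\epsilon)$ and $(1+\epsilon)^{-1/2}=1+O(\epsilon)$ as $\epsilon\downarrow0$, and each of these is $1+O(\tau/\beta)$. This settles the remaining claims:
\begin{itemize}
  \item \emph{Value.} $\delta\Delta_\tau=\delta\sqrt{2\beta\tau}\,(1+O(\tau/\beta))$.
  \item \emph{Radius.} $\delta\beta/\Delta_\tau=\delta\sqrt{\beta/(2\tau)}\,(1+O(\tau/\beta))$.
  \item \emph{Epigraph variable.} $t_\tau$ equals the radius times $(1+\tau/\beta)$, and $(1+\tau/\beta)(1+O(\tau/\beta))=1+O(\tau/\beta)$.
  \item \emph{Residual.} $\beta\delta\tau/\Delta_\tau=\delta\sqrt{\beta\tau/2}\,(1+O(\tau/\beta))$.
\end{itemize}

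The closing sentence, that an $O(\sqrt\tau)$ residual comes with an $O(\tau^{-1/2})$ decision size, follows directly from these displays with $\beta>0$ fixed. The final remark on $O(\tau/\beta)$ versus $O(\tau)$ needs only that $\beta$ is a fixed positive constant. As a consistency check, I would compare with \Cref{thm:universal-tilt}, taking $\nu=1$, $\kappa=\delta^2/2$, $L=1$ and $a=\beta$ with tilt margin $\mu=\tau$. That theorem predicts radius $\sim\sqrt{\beta\delta^2/(2\tau)}$, which agrees with the computation above.

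The only step needing care is the uniform expansion, in particular making sure the $O(\cdot)$ constants are absolute, using $\epsilon\le1$ say. I expect no real obstacle; the content is bookkeeping on the explicit formulas.
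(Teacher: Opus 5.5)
Your proposal is correct and matches the paper's proof: substitute $a_\tau=\beta+\tau$ into the closed-form minimizer of \Cref{thm:ellipsoidal-socp}(ii), simplify the residual to $\beta\delta\tau/\Delta_\tau$, and expand via $\Delta_\tau=\sqrt{2\beta\tau}\,(1+O(\tau/\beta))$. One small nit in your optional cross-check with \Cref{thm:universal-tilt}: after normalizing to $a=\beta$, the effective margin is $\mu=\beta\tau/(\beta+\tau)\sim\tau$, not exactly $\tau$, though the leading-order agreement still holds.
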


\begin{proof}
Insert $a_\tau=\beta+\tau$ into the formulas of \Cref{thm:ellipsoidal-socp} and use
\[
  \Delta_\tau=\sqrt{2\beta\tau+\tau^2}
  =\sqrt{2\beta\tau}\,\left(1+O(\tau/\beta)\right).
\]
The boundary residual follows directly from
\[
  \ell_0(t_\tau,z_\tau)
  =\frac{\beta\delta(\beta+\tau)-\delta b^{\trans}H^{-1}b}{\Delta_\tau}
  =\frac{\beta\delta\tau}{\Delta_\tau}.
\]
\end{proof}

To quantify local conditioning, set $y:=H^{1/2}z$ and $\widehat b:=H^{-1/2}b$, so $\norm{\widehat b}=\beta$.  After eliminating $t$, the reduced objective is $F_a(y):=a\sqrt{\delta^2+\norm{y}^2}+\widehat b^{\trans}y$, whose minimizer for $a>\beta$ is $y_a^*:=-\delta\widehat b/\sqrt{a^2-\beta^2}$.

The same degeneration appears in the spectrum of the reduced Hessian.

\begin{proposition}\label{prop:hessian}
Let $a>\beta>0$ and $\Delta:=\sqrt{a^2-\beta^2}$.  At $y_a^*$, the Hessian of $F_a$ has eigenvalue $\lambda_{\parallel}=\Delta^3/(\delta a^2)$ in the direction of $\widehat b$ and eigenvalue $\lambda_{\perp}=\Delta/\delta$ on $\widehat b^\perp$.  For $n\geq2$, the Euclidean condition number is
\[
  \cond\bigl(\nabla^2F_a(y_a^*)\bigr)
  =\frac{a^2}{\Delta^2}
  =\frac{a^2}{a^2-\beta^2}.
\]
With $a=\beta+\tau$,
\[
  \cond\bigl(\nabla^2F_{a_\tau}(y_{a_\tau}^*)\bigr)
  \sim\frac{\beta}{2\tau}.
\]
\end{proposition}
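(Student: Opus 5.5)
The plan is a direct computation of the Hessian of $F_a(y)=a\sqrt{\delta^2+\norm{y}^2}+\widehat b^{\trans}y$. The linear term contributes nothing to second derivatives, so only the radial part $y\mapsto a\phi(\norm{y})$ with $\phi(r)=\sqrt{\delta^2+r^2}$ matters. For this radial function I will use the standard formula
\[
  \nabla^2F_a(y)=a\left(\frac{\Id}{\sqrt{\delta^2+\norm{y}^2}}-\frac{yy^{\trans}}{(\delta^2+\norm{y}^2)^{3/2}}\right).
\]
It shows that the Hessian acts as a scalar on $y^\perp$ and has a single distinguished eigenvector $y$. The eigenvalues are $a/s$ on $y^\perp$ and $a\delta^2/s^3$ along $y$, where $s:=\sqrt{\delta^2+\norm{y}^2}$.

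Next I evaluate at $y_a^*=-\delta\widehat b/\Delta$. This point is parallel to $\widehat b$, so the eigendirections are $\widehat b$ and $\widehat b^\perp$, as claimed. Using $\norm{\widehat b}=\beta$, I get $\norm{y_a^*}^2=\delta^2\beta^2/\Delta^2$, hence
\[
  s^2=\frac{\delta^2(\Delta^2+\beta^2)}{\Delta^2}=\frac{\delta^2a^2}{\Delta^2},
  \qquad s=\frac{\delta a}{\Delta}.
\]
Substituting gives $\lambda_\perp=a\Delta/(\delta a)=\Delta/\delta$ and $\lambda_\parallel=a\delta^2\Delta^3/(\delta^3a^3)=\Delta^3/(\delta a^2)$. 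This is consistent with the value of $t^*$ from \Cref{thm:ellipsoidal-socp}.

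For the condition number, the Hessian is symmetric positive definite. When $n\geq2$ the space $\widehat b^\perp$ is nontrivial, so both eigenvalues actually occur. The ratio of the extreme eigenvalues is then $\lambda_\perp/\lambda_\parallel=a^2/\Delta^2$, since $\Delta<a$ gives $\lambda_\parallel<\lambda_\perp$. Finally I substitute $a_\tau=\beta+\tau$ and $\Delta_\tau^2=2\beta\tau+\tau^2$, which gives $(\beta+\tau)^2/(2\beta\tau+\tau^2)\sim\beta/(2\tau)$ as $\tau\downarrow0$.

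There is no real obstacle here. The only care needed is to derive the radial Hessian formula correctly, by differentiating $\nabla F_a(y)=ay/s+\widehat b$, and to check that $n\geq2$ is needed only so that the perpendicular eigenvalue is present.
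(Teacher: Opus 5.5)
Your proposal is correct and follows essentially the same route as the paper: write the Hessian of $y\mapsto a\sqrt{\delta^2+\norm{y}^2}$ as a scalar multiple of $\Id$ minus a rank-one term, then evaluate it at $y_a^*$ using $\sqrt{\delta^2+\norm{y_a^*}^2}=\delta a/\Delta$ to read off $\lambda_\perp$, $\lambda_\parallel$, their ratio, and the $\tau$-asymptotics. Your remarks on why $n\geq2$ is needed and on the ordering $\lambda_\parallel<\lambda_\perp$ are points the paper leaves implicit.
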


\begin{proof}
For $\zeta(y)=\sqrt{\delta^2+\norm{y}^2}$,
\[
  \nabla^2\zeta(y)=\frac1{\zeta(y)}\Id-\frac{yy^{\trans}}{\zeta(y)^3}.
\]
At $y_a^*$ one has $\zeta(y_a^*)=\delta a/\Delta$ and $\norm{y_a^*}=\delta\beta/\Delta$.  Thus $a\nabla^2\zeta(y_a^*)$ has eigenvalue $\Delta/\delta$ on $\widehat b^\perp$ and $\Delta^3/(\delta a^2)$ in the direction of $\widehat b$.  Their ratio is $a^2/\Delta^2$, and substituting $a=\beta+\tau$ gives the stated asymptotic condition number.
\end{proof}

\begin{remark}
The smallest curvature decays like $\tau^{3/2}$ along the escaping direction, while transverse curvature decays like $\tau^{1/2}$.  The optimal value alone does not reflect this anisotropy; near the boundary, large optimizer norms are accompanied by severe local ill-conditioning.
\end{remark}

\begin{figure}[t]
\centering
\begin{tikzpicture}
\begin{groupplot}[
  group style={group size=2 by 1,horizontal sep=1.5cm},
  width=.44\textwidth,
  height=6.2cm,
  grid=major,
  xmin=.85,xmax=3,
  samples=180,
  domain=1.001:3,
  xlabel={$a$}
]
\nextgroupplot[ymin=0,ymax=3.1,ylabel={$v(a)$},title={Optimal value}]
\addplot[very thick,blue] {sqrt(x^2-1)};
\addplot[very thick,dashed,orange] coordinates {(1,0) (1,3)};
\nextgroupplot[ymin=0,ymax=8,ylabel={$\rho_H(z^*(a))$},title={Optimal radius}]
\addplot[very thick,red] {1/sqrt(x^2-1)};
\addplot[very thick,dashed,orange] coordinates {(1,0) (1,8)};
\end{groupplot}
\end{tikzpicture}
\caption{For $\delta=\beta=1$, the orange dashed vertical line marks the copositive boundary $a=\beta=1$.  The blue curve shows the optimal value $v(a)=\sqrt{a^2-1}$, which vanishes as the boundary is approached from the right, while the red curve shows the optimal radius $1/\sqrt{a^2-1}$, which diverges.}
\label{fig:phase-transition}
\end{figure}

\subsection{Tikhonov regularization}\label{sec:regularization}

We remain in the ellipsoidal setting and fix the boundary objective $\ell_0(t,z):=\beta t+b^{\trans}z$, where $\beta:=\sqrt{b^{\trans}H^{-1}b}>0$.  For $\eps>0$, consider the geometry-adapted Tikhonov problem
\begin{equation}\label{prob:tikhonov}
  (\mathsf T_\eps)\qquad
  \min_{(t,z)\in C_{\delta,H}}
  \left\{
  \ell_0(t,z)+\frac\eps2\bigl(t^2+z^{\trans}Hz\bigr)
  \right\}.
\end{equation}
The quadratic term is strongly convex in the transformed coordinates $(t,H^{1/2}z)$, so a unique minimizer exists.

Set $\widehat b:=H^{-1/2}b$, $u:=\widehat b/\beta$, and $y:=H^{1/2}z$.  For fixed $s=\norm{y}$, the linear term is minimized by $y=-su$, and for fixed $y$, the optimal $t$ lies on the boundary.  Thus \eqref{prob:tikhonov} reduces to
\[
  \min_{s\geq0}J_\eps(s),
  \qquad
  J_\eps(s):=
  \beta\bigl(\sqrt{\delta^2+s^2}-s\bigr)
  +\frac\eps2(\delta^2+2s^2).
\]

The scalar reduction has a unique critical point and gives the Tikhonov escape asymptotics.

\begin{theorem}\label{thm:tikhonov}
For every $\eps>0$, problem \eqref{prob:tikhonov} has a unique minimizer $(t_\eps,z_\eps)$.  There is a unique $s_\eps>0$ satisfying
\begin{equation}\label{eq:regularized-stationarity}
  \beta\left(
  \frac{s_\eps}{\sqrt{\delta^2+s_\eps^2}}-1
  \right)+2\eps s_\eps=0,
\end{equation}
and $t_\eps=\sqrt{\delta^2+s_\eps^2}$, $z_\eps=-s_\eps H^{-1/2}u$.  As $\eps\downarrow0$,
\begin{align}
  s_\eps
  &\sim\left(\frac{\beta\delta^2}{4\eps}\right)^{1/3},
  \label{eq:s-eps-asymptotic}\\
  \ell_0(t_\eps,z_\eps)
  &\sim 2^{-1/3}(\beta\delta^2)^{2/3}\eps^{1/3},
  \notag\\
  \frac\eps2\bigl(t_\eps^2+z_\eps^{\trans}Hz_\eps\bigr)
  &\sim 2^{-4/3}(\beta\delta^2)^{2/3}\eps^{1/3},
  \notag\\
  \val(\mathsf T_\eps)
  &\sim 3\cdot2^{-4/3}(\beta\delta^2)^{2/3}\eps^{1/3}.
  \label{eq:regularized-value-asymptotic}
\end{align}
In particular, the regularized minimizers escape to infinity at rate $\eps^{-1/3}$.
\end{theorem}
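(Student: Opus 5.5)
The argument has two parts: an exact scalar reduction settles existence, uniqueness and the shape of the minimizer, and asymptotics of the stationarity equation give the rates.

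\emph{Reduction.} In the coordinates $y=H^{1/2}z$ the quadratic term is $\frac\eps2(t^2+\norm{y}^2)$, which is strongly convex. The objective is continuous and the feasible set $C_{\delta,H}$ is closed and convex, so a unique minimizer exists.

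For fixed $y$, the objective $\beta t+\frac\eps2 t^2$ is increasing for $t>0$. Since feasibility forces $t\geq\sqrt{\delta^2+\norm y^2}>0$, the optimal choice is $t=\sqrt{\delta^2+\norm y^2}$.

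For fixed $s=\norm y$, Cauchy--Schwarz gives $\widehat b^{\trans}y\geq-\beta s$, with equality only at $y=-su$. The regularizer depends on $y$ only through $s$. The problem therefore reduces exactly to minimizing $J_\eps$ over $s\geq0$, and the minimizer has the stated form $t_\eps=\sqrt{\delta^2+s_\eps^2}$, $z_\eps=-s_\eps H^{-1/2}u$.

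\emph{The scalar problem.} The function $J_\eps$ is strictly convex: $s\mapsto\sqrt{\delta^2+s^2}$ is strictly convex and $\eps s^2$ is strongly convex. It is also coercive. Its derivative is
\[
J_\eps'(s)=\beta\left(\frac{s}{\sqrt{\delta^2+s^2}}-1\right)+2\eps s .
\]
We have $J_\eps'(0)=-\beta<0$, and $J_\eps'$ is strictly increasing and tends to $+\infty$. Hence there is a unique root $s_\eps>0$, which is exactly \eqref{eq:regularized-stationarity}, and it is the minimizer.

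\emph{Asymptotics of the root.} First, $s_\eps\to\infty$. Indeed, if $s_\eps$ stayed bounded along a sequence $\eps\downarrow0$, the equation would force $s/\sqrt{\delta^2+s^2}\to1$, which is impossible for bounded $s$.

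Next, use
\[
1-\frac{s}{\sqrt{\delta^2+s^2}}=\frac{\delta^2}{\sqrt{\delta^2+s^2}\,(\sqrt{\delta^2+s^2}+s)}\sim\frac{\delta^2}{2s^2}.
\]
The stationarity equation then becomes $2\eps s_\eps\sim\beta\delta^2/(2s_\eps^2)$, which gives \eqref{eq:s-eps-asymptotic}.

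This agrees with \Cref{thm:universal-regularization} with $\nu=1$, $q=2$, $\kappa=\delta^2/2$, $a=\beta$ and $\lambda=2\eps$. The extra constant $\frac\eps2\delta^2$ in $J_\eps$ does not affect the minimizer. Alternatively, one can simply redo the computation directly.

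\emph{Value components.} By \eqref{eq:hyperbolic-defect},
\[
\ell_0(t_\eps,z_\eps)=\beta\bigl(\sqrt{\delta^2+s_\eps^2}-s_\eps\bigr)\sim\frac{\beta\delta^2}{2s_\eps}.
\]
Substituting $s_\eps$ gives
\[
\frac{\beta\delta^2}{2}\left(\frac{4\eps}{\beta\delta^2}\right)^{1/3}=2^{-1/3}(\beta\delta^2)^{2/3}\eps^{1/3}.
\]
The penalty term is
\[
\frac\eps2(\delta^2+2s_\eps^2)\sim\eps s_\eps^2=\eps^{1/3}\left(\frac{\beta\delta^2}{4}\right)^{2/3}=2^{-4/3}(\beta\delta^2)^{2/3}\eps^{1/3}.
\]
Adding the two terms gives $(2\cdot2^{-4/3}+2^{-4/3})(\beta\delta^2)^{2/3}\eps^{1/3}$, which is \eqref{eq:regularized-value-asymptotic}.

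The only delicate point is the bookkeeping in the reduction. The regularizer involves $t$ itself, so one must check that it does not push $t$ off the boundary of the feasible set. It does not, because $\beta>0$, $\eps>0$ and $t>0$. One must also check that $t^2+\norm y^2=\delta^2+2s^2$ on that boundary. Everything after these checks is routine asymptotics.
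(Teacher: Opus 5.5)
Your proof is correct and follows essentially the same route as the paper. Both arguments reduce exactly to $J_\eps$, get a unique positive root from strict convexity and $J_\eps'(0)=-\beta<0$, and show $s_\eps\to\infty$. Both then use the rationalized identity for $1-s/\sqrt{\delta^2+s^2}$ to obtain $4\eps s_\eps^3/(\beta\delta^2)\to1$ and substitute this into the residual and penalty terms.
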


\begin{proof}
The objective in \eqref{prob:tikhonov} is strongly convex and coercive on a nonempty closed convex set, hence has a unique minimizer.  The directional reduction above is exact.  Moreover,
\[
  J_\eps''(s)
  =\frac{\beta\delta^2}{(\delta^2+s^2)^{3/2}}+2\eps>0,
\]
so $J_\eps$ is strictly convex.  Since $J_\eps'(0)=-\beta<0$ and $J_\eps'(s)\to\infty$, it has a unique positive critical point, which is characterized by \eqref{eq:regularized-stationarity}.

First, $s_\eps\to\infty$.  If a sequence $s_{\eps_k}$ remained bounded as $\eps_k\downarrow0$, then \eqref{eq:regularized-stationarity} would imply
\[
  \frac{s}{\sqrt{\delta^2+s^2}}=1
\]
at a finite limit point, which is impossible.  Rewrite the stationarity equation as
\[
  2\eps s_\eps
  =\beta\left(1-\frac{s_\eps}{\sqrt{\delta^2+s_\eps^2}}\right)
  =\frac{\beta\delta^2}
  {\sqrt{\delta^2+s_\eps^2}
   \bigl(\sqrt{\delta^2+s_\eps^2}+s_\eps\bigr)}.
\]
Since $s_\eps\to\infty$, multiplying by $2s_\eps^2/(\beta\delta^2)$ yields
\[
  \frac{4\eps s_\eps^3}{\beta\delta^2}
  =\frac{2s_\eps^2}
  {\sqrt{\delta^2+s_\eps^2}
   \bigl(\sqrt{\delta^2+s_\eps^2}+s_\eps\bigr)}
  \longrightarrow1.
\]
This proves \eqref{eq:s-eps-asymptotic}.

Next,
\[
  \sqrt{\delta^2+s^2}-s
  \sim\frac{\delta^2}{2s},
  \qquad
  \frac12(\delta^2+2s^2)\sim s^2.
\]
Substituting \eqref{eq:s-eps-asymptotic} gives
\begin{align*}
  \beta\bigl(\sqrt{\delta^2+s_\eps^2}-s_\eps\bigr)
  &\sim
  \frac{\beta\delta^2}{2}
  \left(\frac{4\eps}{\beta\delta^2}\right)^{1/3}
  =2^{-1/3}(\beta\delta^2)^{2/3}\eps^{1/3},\\
  \frac\eps2(\delta^2+2s_\eps^2)
  &\sim
  \eps\left(\frac{\beta\delta^2}{4\eps}\right)^{2/3}
  =2^{-4/3}(\beta\delta^2)^{2/3}\eps^{1/3}.
\end{align*}
Adding the two terms proves \eqref{eq:regularized-value-asymptotic}.
\end{proof}

\begin{remark}
For the ellipsoidal boundary problem, \Cref{thm:universal-truncation} with $\nu=1$ shows that radius of order $\eta^{-1}$ is required to achieve objective residual $\eta$.  Objective tilting by $\tau$ produces radius $\tau^{-1/2}$ and residual $\tau^{1/2}$; see \Cref{thm:sensitivity}.  Quadratic regularization with coefficient $\eps$ selects radius $\eps^{-1/3}$ and residual $\eps^{1/3}$.  After eliminating $t$, and up to the additive constant $\eps\delta^2/2$, this is the case $q=2$ of \Cref{thm:universal-regularization} with the parameter rescaling $\lambda=2\eps$.  The three approximation mechanisms therefore select different scales.
\end{remark}

\section{Robust optimization interpretation}\label{sec:robust-extensions}

The ellipsoidal model is also the exact robust counterpart of a family of affine inequalities with joint coefficient uncertainty.  This representation transfers the SOCP solvability phase diagram directly to the robust model.

\subsection{Ellipsoidal coefficient uncertainty}\label{sec:robust}

Let
\[
  \mathbb B_{n+1}:=
  \{(\xi_0,\xi)\in\R\times\R^n:\xi_0^2+\norm{\xi}^2\leq1\}.
\]
For a decision $(t,z)$, consider the uncertain affine inequality
\begin{equation}\label{eq:robust-constraint}
  \delta \xi_0+\xi^{\trans}H^{1/2}z\leq t
  \qquad\text{for every }(\xi_0,\xi)\in\mathbb B_{n+1}.
\end{equation}
For each fixed uncertainty realization, the constraint is linear in $(t,z)$, with uncertainty in both the intercept and slope coefficients.

\begin{proposition}\label{prop:robust-counterpart}
The robust constraint \eqref{eq:robust-constraint} is equivalent to $(t,z)\in C_{\delta,H}$.
\end{proposition}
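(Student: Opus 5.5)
The plan is to compute the worst case of the left-hand side of \eqref{eq:robust-constraint} over the uncertainty ball in closed form, and then read off the epigraph inequality. For a fixed decision $(t,z)$, the left-hand side is the linear functional
\[
  (\xi_0,\xi)\mapsto \ip{(\xi_0,\xi)}{(\delta,H^{1/2}z)}
\]
evaluated on the Euclidean unit ball $\mathbb B_{n+1}$. Hence the robust constraint says exactly that
\[
  \sup_{(\xi_0,\xi)\in\mathbb B_{n+1}}
  \bigl(\delta\xi_0+\xi^{\trans}H^{1/2}z\bigr)\leq t .
\]

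The first step evaluates this supremum. It is the support function of the Euclidean unit ball at the vector $v:=(\delta,H^{1/2}z)$, so it equals $\norm{v}$. The Cauchy--Schwarz inequality gives the upper bound $\norm{v}$. The bound is attained at $(\xi_0,\xi)=v/\norm{v}$, which is well defined because $\delta>0$ forces $v\neq0$. Computing the norm gives
\[
  \norm{v}^2=\delta^2+z^{\trans}H^{1/2}H^{1/2}z=\delta^2+z^{\trans}Hz,
\]
using the symmetry of $H^{1/2}$. Therefore the supremum is $\sqrt{\delta^2+z^{\trans}Hz}$.

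The second step concludes the equivalence. If $(t,z)\in C_{\delta,H}$, then every realization satisfies the inequality, because each one is bounded by the supremum, which is at most $t$. Conversely, if every realization satisfies the inequality, then the maximizing realization $v/\norm{v}$ does too, and this yields $t\geq\sqrt{\delta^2+z^{\trans}Hz}$.

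There is no real obstacle here. The only points needing care are that the supremum is attained, which holds because the ball is compact and $v\neq0$ since $\delta>0$, and the identity $(H^{1/2})^{\trans}H^{1/2}=H$. The argument also recovers the SOCP representation $(t,H^{1/2}z,\delta)\in\Qcone^{n+2}$ stated earlier.
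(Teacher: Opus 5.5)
Your proof is correct and follows the paper's argument: the worst case of the left-hand side over $\mathbb B_{n+1}$ is the Euclidean norm $\norm{(\delta,H^{1/2}z)}=\sqrt{\delta^2+z^{\trans}Hz}$, so the robust constraint reduces to the epigraph inequality. Attainment at $v/\norm{v}$ is a harmless extra detail and is not needed, because a supremum is at most $t$ exactly when every realization is.
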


\begin{proof}
The left-hand side of the robust constraint has worst-case value
\[
  \sup_{\xi_0^2+\norm{\xi}^2\leq1}
  \bigl(\delta \xi_0+\xi^{\trans}H^{1/2}z\bigr)
  =\norm{(\delta,H^{1/2}z)}
  =\sqrt{\delta^2+z^{\trans}Hz}.
\]
Thus the robust constraint is equivalent to $(t,z)\in C_{\delta,H}$.
\end{proof}

Consider the robust problem
\begin{equation}\label{prob:robust}
\begin{aligned}
  \inf_{t\in\R,\ z\in\R^n}\quad&at+b^{\trans}z\\
  \text{subject to}\quad&
  \delta \xi_0+\xi^{\trans}H^{1/2}z\leq t
  \quad\forall (\xi_0,\xi)\in\mathbb B_{n+1}.
\end{aligned}
\end{equation}

Proposition~\ref{prop:robust-counterpart} and Theorem~\ref{thm:ellipsoidal-socp} give the following phase diagram.

\begin{corollary}\label{cor:robust-phase}
Let $\beta=\sqrt{b^{\trans}H^{-1}b}$.
\begin{enumerate}[label=\textup{(\roman*)}]
\item If $a>\beta$, problem \eqref{prob:robust} has the unique optimal solution \eqref{eq:ellipsoid-minimizer}.
\item If $a=\beta>0$, the robust problem is strictly feasible and has finite value zero, but no feasible point attains this value.
\item If $a<\beta$, the robust objective is unbounded below.
\item If $(a,b)=(0,0)$, every feasible point is optimal with value zero.
\end{enumerate}
\end{corollary}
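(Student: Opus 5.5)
The plan is to transfer everything through \Cref{prop:robust-counterpart}. That proposition says the robust feasible set of \eqref{prob:robust} is exactly $C_{\delta,H}$. The objective $at+b^{\trans}z$ is the same in both problems. Hence \eqref{prob:robust} and $(P_{a,b}^{\delta,H})$ in \eqref{prob:ellipsoid} are the same optimization problem: same feasible set, same objective, same value, same minimizer set. Each item of the corollary then becomes the matching item of \Cref{thm:ellipsoidal-socp}, read through this identification.

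The cases go as follows.

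\emph{Case (i).} If $a>\beta$, then $a>0$. \Cref{thm:ellipsoidal-socp}(ii) gives the unique minimizer \eqref{eq:ellipsoid-minimizer}, which is therefore the unique robust optimal solution.

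\emph{Case (iii).} If $a<\beta$, \Cref{thm:ellipsoidal-socp}(i) gives value $-\infty$. For negative $a$ one may also argue directly by sending $t\to+\infty$, as in the proof of that theorem.

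\emph{Case (iv).} If $(a,b)=(0,0)$, every feasible point is optimal with value zero by \Cref{thm:ellipsoidal-socp}(iv).

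\emph{Case (ii).} If $a=\beta>0$, \Cref{thm:ellipsoidal-socp}(iii) gives value zero, not attained. The remaining claim is strict feasibility of the robust problem. I would interpret it as strict feasibility of the robust inequality, uniformly over the uncertainty set: some $(t,z)$ satisfies $\delta\xi_0+\xi^{\trans}H^{1/2}z<t$ for all $(\xi_0,\xi)\in\mathbb B_{n+1}$. The point $(t,z)=(2\delta,0)$ works. Its worst-case left-hand side is $\delta<2\delta$ by the sup computation in the proof of \Cref{prop:robust-counterpart}, so the slack is at least $\delta$ uniformly in the uncertainty. This also matches the Slater point used in \Cref{prop:ellipsoid-duality}, where $(2\delta,0,\delta)$ lies in the interior of $\Qcone^{n+2}$. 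One can add that the same holds for every objective, and that \Cref{prop:ellipsoid-duality} supplies zero duality gap and dual attainment.

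The only step needing any care is making "strictly feasible" precise for a semi-infinite constraint. I would state that the supremum over $\mathbb B_{n+1}$ is strictly below $t$; this is stronger than pointwise strict inequality and is what the conic Slater condition encodes. Everything else is direct substitution.
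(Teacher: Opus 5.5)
Your proposal is correct and follows the paper's own argument: identify \eqref{prob:robust} with $(P_{a,b}^{\delta,H})$ via \Cref{prop:robust-counterpart}, then read off each case from \Cref{thm:ellipsoidal-socp}, with strict feasibility supplied by the Slater point $(2\delta,0)$ exactly as you describe. One small correction to your closing remark: $\mathbb B_{n+1}$ is compact and the left-hand side is linear in $(\xi_0,\xi)$, so the supremum is attained, and uniform and pointwise strict feasibility coincide here rather than the former being strictly stronger.
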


\begin{remark}
For this ellipsoidal uncertainty model, nonattainment occurs only on the codimension-one boundary $a=\beta$; every strict margin $a>\beta$ yields a unique robust optimum.  At the boundary, however, feasibility, boundedness, strict conic feasibility, zero duality gap, and dual attainment all persist even though no primal robust optimizer exists.
\end{remark}

\section*{Acknowledgments}
The author gratefully acknowledges partial support from an AMS--Simons travel grant.

\end{document}